\documentclass[11pt,oneside,english]{amsart}
\usepackage{ae,aecompl}
\usepackage[T1]{fontenc}
\usepackage[latin9]{inputenc}
\usepackage{amsthm}
\usepackage{amstext}
\usepackage{amssymb}
\usepackage{esint}

\makeatletter
\numberwithin{equation}{section}
\numberwithin{figure}{section}
\theoremstyle{plain}
\newtheorem{thm}{\protect\theoremname}
  \theoremstyle{definition}
  \newtheorem{defn}{\protect\definitionname}
  \theoremstyle{plain}
  \newtheorem{lem}{\protect\lemmaname}
  \theoremstyle{plain}
  \newtheorem*{thm*}{\protect\theoremname}
  \theoremstyle{plain}
  \newtheorem{prop}{\protect\propositionname} 
  \newtheorem{cor}{\protect\corollaryname}
  \theoremstyle{definition}
  \newtheorem*{example}{\protect\examplename}
  \theoremstyle{remark}
  \newtheorem{rem}{\protect\remarkname}

\makeatother

\usepackage{babel}
  \providecommand{\definitionname}{Definition}
  \providecommand{\examplename}{Example}
  \providecommand{\lemmaname}{Lemma}
  \providecommand{\propositionname}{Proposition}
  \providecommand{\corollaryname}{Corollary}
  \providecommand{\remarkname}{Remark}
  \providecommand{\theoremname}{Theorem}

\begin{document}
\subjclass[2010]{Primary 37A05, 37A50, 60G55; secondary 60D05}

\title{Prime Poisson suspensions}

\author{François Parreau and Emmanuel Roy}

\curraddr{Laboratoire Analyse Géométrie et Applications, UMR 7539, Université
Paris 13, 99 avenue J.B. Clément, F-93430 Villetaneuse, France}

\email{parreau@math.univ-paris13.fr, roy@math.univ-paris13.fr}
\begin{abstract}
We establish a necessary and sufficient condition for a Poisson suspension
to be prime. The proof is based on the Fock space structure of the
$L^{2}$-space of the Poisson suspension. We give examples of explicit
infinite measure preserving systems, in particular of non-singular compact group
rotations that give rise to prime Poisson suspensions. We
also compare some properties of so far known prime transformations with those
of our examples, showing that these examples are new.
\end{abstract}

\keywords{Poisson suspension, prime map.}

\maketitle

\section{Introduction}

The aim of this paper is to build new examples of prime dynamical
systems, that is, systems which have a trivial factor
structure: their only factors are  (up to isomorphism) the original
system and the one point system.

The systems we construct are particular cases of Poisson suspensions which are
probability preserving system canonically build from infinite measure
preserving systems.

The paper is organized as follows: we first recall basic notions
on factors and give an overview of prime systems that exist in the
literature. Then in Section 2 we introduce Poisson measures and all the specific
tools that we shall need to derive structural results on the $\sigma$-algebra
of a Poisson measure. Section 3 is devoted to Poisson suspensions,
which are Poisson measures endowed with particular measure preserving
transformations, and in Section 4, we give the main result that will
be used to exhibit prime Poisson Suspensions. We also give a collection of ergodic
consequences of primeness for Poisson suspensions, in particular disjointness properties
from other families of dynamical systems.

The last Section gives concrete examples
of infinite measure systems that satisfy the hypothesis required to
obtain prime suspensions and prove that these systems actually exist.

\subsection{On factors of a dynamical system with an invariant probability measure}

We first recall what a factor of a dynamical system is. Let a system $\left(X,\mathcal{A},\mu,T\right)$ be given, where $\left(X,\mathcal{A},\mu\right)$
is a standard Borel probability space and $T$ is an invertible measure-preserving transformation on $X$. There are
two equivalent points of view:
\begin{enumerate}
\item We say that a sub-$\sigma$-algebra $\mathcal{C}\subset\mathcal{A}$ is a \emph{factor} if $T^{-1}\mathcal{C}=\mathcal{C}$.
\item Another system $\left(Y,\mathcal{B},\nu,S\right)$ is said
to be a \emph{factor} of $\left(X,\mathcal{A},\mu,T\right)$ if there is a measurable \emph{factor map} $\varphi: X\to Y$ such
that $\nu=\mu\circ\varphi^{-1}$ and the following diagram is commutative:
\[
\begin{array}{ccc}
\left(X,\mathcal{A},\mu\right) & \begin{array}[b]{c}
T\\
\rightarrow
\end{array} & \left(X,\mathcal{A},\mu\right)\\
\varphi\downarrow &  & \downarrow\varphi\\
\left(Y,\mathcal{B},\nu\right) & \begin{array}[b]{c}
S\\
\rightarrow
\end{array} & \left(Y,\mathcal{B},\nu\right)
\end{array}
\]

\end{enumerate}

These two definitions can be unified, indeed, with the latter one we obtain
a factor in the first sense by observing that $\varphi^{-1}\mathcal{B}$
satisfies $\varphi^{-1}\mathcal{B}\subset\mathcal{A}$ and $T^{-1}\left(\varphi^{-1}\mathcal{B}\right)=\varphi^{-1}\mathcal{B}$. For the other direction, if $\mathcal{B}\subset\mathcal{A}$
is a factor, we can form the quotient system $\left(\left(X_{\diagup\mathcal{B}},\mathcal{A}_{\diagup\mathcal{B}},\mu_{\diagup\mathcal{B}},T_{\diagup\mathcal{B}}\right)\right)$
and observe that the natural projection $\pi_{\mathcal{B}}$ is a
factor map.

We give the definition of a prime system with the first, internal,
point of view:
\begin{defn}
A system $\left(X,\mathcal{A},\mu,T\right)$ is said to be \emph{prime}
if $\mathcal{A}$ and $\left\{ X,\emptyset\right\} $ are the only
factors of the system.
\end{defn}
In other words, prime systems are those systems with the simplest factor
structure.

\subsection{An overview of previously known prime systems.}

The first examples are Ornstein's mixing rank one constructions (\cite{Orns72root}),
proved to be prime by Polit in \cite{Polit74weakiso}. Indeed those
systems are part of the larger class of \emph{simple systems} (\cite{Veech82prime, JuncRud87graphs})
which possess their own theory: they are those systems $\left(X,\mathcal{A},\mu,T\right)$
whose ergodic selfjoinings are either the product joining or graph
joinings $\Delta_{S}$ with $S\in C\left(T\right)$, the centralizer
of $T$. In particular factors $\mathcal{K}$ of simple systems correspond to compact
groups of $K\subset C\left(T\right)$ as follows:
\[
\mathcal{K}:=\left\{ A\in\mathcal{A},\: SA=A,\:\mbox{for all } A\in K\right\}. 
\]

Therefore, if $K$ is a maximal compact subgroup of the centralizer
of a simple system $T$, then it induces a prime system $T_{\diagup\mathcal{K}}$.
The most drastic situation occurs when the centralizer of a simple
system is reduced to the powers of the transformation, and thus the system itself is prime.
It is then said to have minimal self-joinings (MSJ(2)). Mixing rank one transformations
are such (\cite{King88Struc}), and so is Chacon transformation (\cite{JunRaSwa80Chacon})
and many others (see for example \cite{deljunco83counter}).

There also exist examples of rigid, and therefore not MSJ(2), simple
prime transformations (see \cite{DelRu87Rankonerig}).  In \cite{GlaWei94Prime} (see also \cite{DelDan08Cut}), examples
are given of simple systems with a centralizer possessing a non normal maximal
compact subgroup $K$, giving way to non simple prime systems since, if $T$ is simple, the factor system $T_{\diagup\mathcal{K}}$ is simple if and only if $K$ is
normal in $C\left(T\right)$.

We recall that there exist horocycle flows whose non-zero
times maps are prime. However, it is proved in \cite{Thou95Joinings}
that they can always be seen as factors of simple systems, and they are
thus part of the above theory.

Many examples of prime maps are also rank one. Indeed, mildly
mixing rank one maps are prime, as King showed in \cite{King86Comweak}
that a strict factor of a rank one map is rigid. It is yet unknown
wether prime rank one maps are always factors of simple systems.
To be complete, let us mention another situation  which do not fit into already described
ones, where prime systems occur:
if $T$ is
MSJ(2), then the symmetric factor of $T\times T$ is prime and so
is the map $\left(x,y\right)\mapsto\left(y,Tx\right)$ (see \cite{Rud90Fund} page 128).

We believe that we have listed, if not every example, at least all so far known families of prime probability measure preserving transformations.

\section{Technology}

\subsection{Poisson measure}

Let $\left(X,\mathcal{A},\mu\right)$ be a $\sigma$-finite, infinite measure
space, $\mu$ being continuous, and $\left(X^{*},\mathcal{A}^{*},\mu^{*}\right)$
be the corresponding \emph{Poisson measure space}. We recall the definition.

$X^{*}$ is the space of counting measures on $\left(X,\mathcal{A}\right)$,
i.e. measures of the form $\nu={\displaystyle \sum_{i\in I}}\delta_{x_{i}}$,
where the $x_{i}$ are in $X$ and $I$ is countable.
 
We define the maps $N\left(A\right)$, $A\in\mathcal{A}$, on
$X^{*}$ by $N\left(A\right)\left(\nu\right)=\nu\left(A\right)$.
The map $N\left(A\right)$ ``counts'' the number of points that
fall into the set $A$.
We set $\mathcal{A}^{*}:=\sigma\left\{ N\left(A\right),\: A\in\mathcal{A}\right\} $,
the smallest $\sigma$-algebra on $X^{*}$ that makes the operation
of measuring sets in $\mathcal{A}$ measurable.

The measure $\mu^{*}$ is now defined as the only probability measure on $\left(X^{*},\mathcal{A}^{*}\right)$
such that, for any $k\in\mathbb{N}$ and arbitrary disjoint sets $A_{1},\dots,A_{k}$
in $\mathcal{A}$ so that $0<\mu\left(A_{i}\right)<+\infty$, the
random variables $N\left(A_{1}\right),\dots,N\left(A_{k}\right)$
are independent and distributed according to Poisson laws with parameters
$\mu\left(A_{1}\right)$,\ldots, $\mu\left(A_{k}\right)$ respectively.

The underlying measure $\mu$ is often called the \emph{intensity}
of the Poisson measure.

It is frequent to denote the identity on $X^{*}$ by $N$, that is
$N\left(\nu\right)=\nu$ where $\nu$ is a counting measure on $X$.
Under the distribution $\mu^{*}$, $N$ is therefore a random measure.

\subsection{Fock space}

In the sequel, for $n\ge1$, let $L_{\text{sym}}^{2}\left(\mu^{\otimes n}\right)$
denote the subspace of $L^{2}\left(\mu^{\otimes n}\right)$ of functions
invariant by permutations of coordinates. It is convenient to endow
it with the normalized scalar product $\left\langle \cdot,\cdot\right\rangle _{n}:=\frac{1}{n!}\left\langle \cdot,\cdot\right\rangle _{L^{2}\left(\mu^{\otimes n}\right)}$.

We recall that $L^{2}\left(\mu^{*}\right)$ has a Fock-space structure
based on $L^{2}\left(\mu\right)$. Namely:

\[
L^{2}\left(\mu^{*}\right)\simeq\mathbb{C}\oplus L^{2}\left(\mu\right)\oplus L_{\text{sym}}^{2}\left(\mu^{\otimes2}\right)\oplus\cdots\oplus L_{\text{sym}}^{2}\left(\mu^{\otimes n}\right)\oplus\cdots
\]

That is, $L^{2}\left(\mu^{*}\right)$ can be seen as an orthogonal
sum of subspaces $H^{n}$, $n\in\mathbb{N}$, where $H^{n}$ (called \emph{chaos of
order $n$}) is naturally identified to $L_{\text{sym}}^{2}\left(\mu^{\otimes n}\right)$
(and $H^{0}$, the subspace of constant functions, is identified to $\mathbb{C}$). The identification
is done
 through the (normalized) multiple stochastic integrals $\frac{1}{n!}I^{\left(n\right)}$
from $L_{\text{sym}}^{2}\left(\mu^{\otimes n}\right)$ to $H^{n}$
whose constructions are detailed in \cite{Lieb94Isom} for example.

In this paper, we only need to know $I^{(n)}$ explicitely in the case $n=1$:
for $f\in L^{1}\left(\mu\right)\cap L^{2}\left(\mu\right)$, define
\[
I^{\left(1\right)}\left(f\right):=\int_{X}f\left(x\right)\left(N\left(dx\right)-\mu\left(dx\right)\right)
\]

\subsection{Difference operators}

In this section, we collect some information about \emph{difference
operators} we shall need in the next section. It is taken from
the very noticeable paper of Last and Penrose \cite{LastPen11Fock}.
Lemmas \ref{lem:aeequal} and \ref{lem:annihilChaos} is simply a convenient reformulation of 
the various properties of difference operators we need to obtain Theorem
\ref{thm:PrimePoisson}. They also appear somehow implicitely in \cite{LastPen11Fock}. 

Let $F$ be a measurable function on $\left(X^{*},\mathcal{A}^{*}\right)$.
Define the difference operator $D_{y}^{1}F$ by:
\[
D_{y}^{1}F\left(\nu\right):=F\left(\nu+\delta_{y}\right)-F\left(\nu\right).
\]
It consists into adding a particle at the position $y\in X$ and evaluating
$F$ with and without this particle and taking the difference.

By induction, we define $D_{y_{1}\dots,y_{n}}^{n}F$
\[
D_{y_{1}\dots,y_{n}}^{n}F:=D_{y_{2}\dots,y_{n}}^{n-1}\left(D_{y_{1}}^{1}F\right).
\]
It can be observed that this operator is symmetric in $y$'s for any
$n\in\mathbb{N}$.

Closely related to these operators is the following formula (known
as \emph{Mecke's formula} \cite{Meck67Form}) :

\begin{equation}
\int_{X^{*}}\int_{X}h\left(\nu,x\right)\nu\left(dx\right)\mu^{*}\left(d\nu\right)=\int_{X^{*}}\int_{X}h\left(\nu+\delta_{x},x\right)\mu\left(dx\right)\mu^{*}\left(d\nu\right)\label{eq:Palm}
\end{equation}
valid for all positive measurable functions $h$ defined on $X^{*}\times X$.

As a first consequence of this formula, we recall the following
result which is an immediate extension of Lemma 2.4 in \cite{LastPen11Fock}:
\begin{lem}\label{lem:aeequal}
If $F$ and $G$ are two measurable functions on $\left(X^{*},\mu^{*}\right)$
that coïncide $\mu^{*}$-almost surely, then for any $n\in\mathbb{N}^{*}$,
\[
D_{y_{1}\dots,y_{n}}^{n}F\left(\nu\right)=D_{y_{1}\dots,y_{n}}^{n}G\left(\nu\right),
\]

for $\mu^{*}\otimes\mu^{\otimes n}$-almost every $\left(\nu,y_{1}\dots,y_{n}\right)\in X^{*}\times X^{n}$.
\end{lem}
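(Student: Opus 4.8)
The plan is to deduce the statement from a single application of Mecke's formula \eqref{eq:Palm}, combined with the inclusion--exclusion expansion of the iterated difference operator and Fubini's theorem. First I would record the identity
\[
D^n_{y_1,\dots,y_n}F(\nu)=\sum_{S\subseteq\{1,\dots,n\}}(-1)^{n-|S|}\,F\left(\nu+\sum_{i\in S}\delta_{y_i}\right),
\]
valid for every measurable $F$, which follows by an immediate induction on $n$ from the definition $D^1_yF(\nu)=F(\nu+\delta_y)-F(\nu)$. The same identity holds for $G$. Consequently, to obtain $D^n_{y_1,\dots,y_n}F=D^n_{y_1,\dots,y_n}G$ almost everywhere it suffices to show that for each fixed $S$,
\[
F\left(\nu+\sum_{i\in S}\delta_{y_i}\right)=G\left(\nu+\sum_{i\in S}\delta_{y_i}\right)
\]
for $\mu^*\otimes\mu^{\otimes n}$-almost every $(\nu,y_1,\dots,y_n)$, there being only finitely many sets $S$.

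The core is a null-set transport lemma: if $B\subseteq X^*$ satisfies $\mu^*(B)=0$, then the set $\{(\nu,z):\nu+\delta_z\in B\}$ has $\mu^*\otimes\mu$-measure zero. To prove it I would feed the positive function $h(\nu,x):=\mathbf 1_B(\nu)$, which does not depend on $x$, into \eqref{eq:Palm}. Its left-hand side is $\int_{B}\nu(X)\,\mu^*(d\nu)$, the integral of a nonnegative function over a $\mu^*$-null set, hence equal to $0$ even though $\nu(X)=+\infty$ for $\mu^*$-almost every $\nu$. Its right-hand side equals exactly $(\mu^*\otimes\mu)\{(\nu,x):\nu+\delta_x\in B\}$, which is therefore null.

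Iterating this lemma gives, for every $k\ge 1$, that $\{(\nu,z_1,\dots,z_k):\nu+\delta_{z_1}+\cdots+\delta_{z_k}\in B\}$ is $\mu^*\otimes\mu^{\otimes k}$-null whenever $\mu^*(B)=0$: assuming it for $k-1$, Fubini's theorem shows that for $\mu^{\otimes(k-1)}$-almost every $(z_1,\dots,z_{k-1})$ the section $B':=\{\nu:\nu+\delta_{z_1}+\cdots+\delta_{z_{k-1}}\in B\}$ is $\mu^*$-null, and applying the one-point lemma to $B'$ and integrating back closes the induction. Taking $B:=\{F\neq G\}$ and $k=|S|$ (the remaining variables $y_i$, $i\notin S$, being absorbed by Fubini) yields the displayed equality of translates for each $S$, and the inclusion--exclusion formula then gives $D^n_{y_1,\dots,y_n}F=D^n_{y_1,\dots,y_n}G$ almost everywhere. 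The only genuinely delicate point is this one-point lemma, and within it the observation that the left-hand side of Mecke's formula vanishes because it is an integral over a null set; one also uses, routinely, the joint measurability of $(\nu,z)\mapsto\nu+\delta_z$ on the Poisson space.
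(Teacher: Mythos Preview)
Your proof is correct. The paper does not actually give its own proof of this lemma: it simply records it as ``an immediate extension of Lemma 2.4 in \cite{LastPen11Fock}'', which is precisely the $n=1$ case (your null-set transport lemma via Mecke's formula), and the extension to general $n$ is the induction you carry out. So your argument is not a different route but rather a clean, self-contained unpacking of the citation; the inclusion--exclusion expansion is a convenient device but one could equally well induct directly on $D^n = D^{n-1}\circ D^1$ as the paper's phrasing suggests.
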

So, these operators are well defined for $F\in L^{2}\left(\mu^{*}\right)$. It turns out that
they establish a remarkable link with the Fock space structure (see \cite{LastPen11Fock}):
\begin{itemize}
\item For $\mu^{\otimes n}$-almost every $\left(y_{1}\dots,y_{n}\right)\in X^{n}$, $\nu\mapsto\left(y_{1}\dots,y_{n}\right)$,
$D_{y_{1}\dots,y_{n}}^{n}F\left(\nu\right)$ is $\mu^{*}$-integrable.
\item $\left(y_{1}\dots,y_{n}\right)\mapsto\mathbb{E}\left[D_{y_{1}\dots,y_{n}}^{n}F\right]$
is in $L_{\text{sym}}^{2}\left(\mu^{\otimes n}\right)$.
\item If we set $P_{n}F\left(y_{1}\dots,y_{n}\right):=\mathbb{E}\left[D_{y_{1}\dots,y_{n}}^{n}F\right]$,
and $P_{0}F:=\mathbb{E}\left[F\right]$, $F$ decomposes in the Fock
space as:
\[
F\simeq P_{0}F+\dots+P_{n}F+\dots
\]
\end{itemize}
In particular, for $f\in L_{\text{sym}}^{2}\left(\mu^{\otimes n}\right)$,  $P_{n}\left[\frac{1}{n!}I^{\left(n\right)}\left(f \right)\right]=f$,
$\mu^{\otimes n}$-a.e.,
and in this case we can even remove the expectation:
\[
D_{y_{1}\dots,y_{n}}^{n}\left[\frac{1}{n!}I^{\left(n\right)}\left(f\right)\right]\left(\nu\right)=f\left(y_{1}\dots,y_{n}\right)
\]
for $\mu^{*}\otimes\mu^{\otimes n}$-almost all $\nu,y_{1}\dots,y_{n}\in X^{*}\times X^{n}$.
\begin{lem}
\label{lem:annihilChaos}If $F\in H^{n}$, then, for $\mu$-almost
all $y\in X$, $D_{y}^{1}F\in H^{n-1}$.\end{lem}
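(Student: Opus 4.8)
The plan is to use the characterization of the chaos $H^{n}$ through the operators $P_{k}$ together with the compositional structure of the difference operators. The key point, guaranteed by the Fock decomposition $F\simeq P_{0}F+\dots+P_{n}F+\dots$ recalled before the statement, is that $(P_{k}F)_{k}$ is exactly the sequence of Fock components of $F$; consequently $F$ belongs to $H^{n}$ if and only if $P_{k}F=0$ in $L_{\text{sym}}^{2}\left(\mu^{\otimes k}\right)$ for every $k\neq n$.

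First I would fix $y\in X$, set $G:=D_{y}^{1}F$, and compute the Fock components $P_{k}G$ of $G$. By the inductive definition of the higher difference operators, $D_{y_{1},\dots,y_{k}}^{k}\left(D_{y}^{1}F\right)=D_{y,y_{1},\dots,y_{k}}^{k+1}F$, so taking expectations yields
\[
P_{k}G\left(y_{1},\dots,y_{k}\right)=\mathbb{E}\left[D_{y_{1},\dots,y_{k}}^{k}G\right]=\mathbb{E}\left[D_{y,y_{1},\dots,y_{k}}^{k+1}F\right]=P_{k+1}F\left(y,y_{1},\dots,y_{k}\right).
\]
Thus the Fock expansion of $G$ is obtained from that of $F$ by freezing one variable at $y$ and shifting the order down by one. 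Since $F\in H^{n}$ forces $P_{k+1}F=0$ for every $k\neq n-1$, we get $P_{k}G=0$ for all such $k$, while $P_{n-1}G\left(y_{1},\dots,y_{n-1}\right)=P_{n}F\left(y,y_{1},\dots,y_{n-1}\right)$, which identifies $G$ as an element of $H^{n-1}$.

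The remaining work is measure-theoretic bookkeeping. For each fixed $k\neq n-1$ the vanishing of $P_{k+1}F$ in $L_{\text{sym}}^{2}\left(\mu^{\otimes(k+1)}\right)$ gives, by Fubini, that the slice $\left(y_{1},\dots,y_{k}\right)\mapsto P_{k+1}F\left(y,y_{1},\dots,y_{k}\right)$ vanishes $\mu^{\otimes k}$-almost everywhere for $\mu$-almost every $y$; taking the countable union of the exceptional null sets over all $k$ produces a single $\mu$-null set outside of which every component $P_{k}G$ with $k\neq n-1$ is zero. Likewise, writing $f:=P_{n}F\in L_{\text{sym}}^{2}\left(\mu^{\otimes n}\right)$, Fubini gives $\int_{X}\|f\left(y,\cdot\right)\|_{L^{2}\left(\mu^{\otimes(n-1)}\right)}^{2}\,d\mu\left(y\right)=\|f\|_{L^{2}\left(\mu^{\otimes n}\right)}^{2}<\infty$, so for $\mu$-almost every $y$ the surviving component $P_{n-1}G=f\left(y,\cdot\right)$ has finite norm.

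I expect the main obstacle to be not the algebraic identity above but the justification that $G=D_{y}^{1}F$ genuinely lies in $L^{2}\left(\mu^{*}\right)$ for $\mu$-almost every $y$, so that the Fock decomposition applies to $G$ and the equalities $P_{k}G=P_{k+1}F\left(y,\cdot\right)$ legitimately describe its chaos expansion rather than a merely formal series. This I would handle by invoking the $L^{2}$ theory of the difference operator from \cite{LastPen11Fock}: for $F\in H^{n}$ the squared norm of $D^{1}F$ in $L^{2}\left(\mu^{*}\otimes\mu\right)$ equals, up to a combinatorial constant, a finite multiple of $\|f\|^{2}$, and Lemma \ref{lem:aeequal} guarantees that the pointwise definition of $D_{y}^{1}F$ is independent of the chosen representative of $F$. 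Together these ensure $D_{y}^{1}F\in L^{2}\left(\mu^{*}\right)$ for $\mu$-almost every $y$, completing the identification $D_{y}^{1}F\in H^{n-1}$.
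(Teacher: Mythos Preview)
Your proposal is correct and follows essentially the same approach as the paper: both arguments establish $P_{k}\left(D_{y}^{1}F\right)\left(y_{1},\dots,y_{k}\right)=P_{k+1}F\left(y_{1},\dots,y_{k},y\right)$ via the inductive definition of $D^{k}$, use $F\in H^{n}$ to kill the components with $k\neq n-1$, and invoke \cite{LastPen11Fock} to justify that $D_{y}^{1}F\in L^{2}\left(\mu^{*}\right)$ for $\mu$-almost every $y$. The only cosmetic difference is the order of presentation and the specific citation: the paper appeals to Theorem~3.3 of \cite{LastPen11Fock} to obtain the explicit identity $D_{y}^{1}F=nI^{(n-1)}f_{n}\left(\cdot,\dots,\cdot,y\right)$ (which in fact already shows $D_{y}^{1}F\in H^{n-1}$ directly), whereas you invoke the $L^{2}\left(\mu^{*}\otimes\mu\right)$ norm bound; both are legitimate routes to the same $L^{2}$ conclusion.
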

\begin{proof}
We first claim that $D_{y}^{1}F$ is in $L^{2}\left(\mu^{*}\right)$ for $\mu$-almost
all $y\in X$ when $F\in H^{n}$. Then Equation (3.9) in \cite{LastPen11Fock} reduces to $F=I^{(n)}\left(f_{n}\right)$, where $f_n=\frac{1}{n!}P_{n}F\in  L_{\text{sym}}^{2}\left(\mu^{\otimes n}\right)$, and condition (3.11) is satified. Therefore, by Theorem 3.3 in \cite{LastPen11Fock}, $D_{y}F(\nu)=D'_{y}F(\nu)$ $\mu^{*}\otimes\mu$-a.e.$(\nu,y)$, where $D'_{y}$ is given by formula (3.10) of this  paper, that is in our case $D'_{y}F=nI^{(n-1)}f_n(\cdot,\ldots,\cdot,y)$.
The claim follows.

Now, let $k\neq n-$1, we get 
\begin{eqnarray*}
P_{k}\left(D_{y}^{1}F\right)\left(y_{1}\dots,y_{k}\right)
=\mathbb{E}\left[D_{y_{1}\dots,y_{k}}^{k}\left(D_{y}^{1}F\right)\right]
&=&\mathbb{E}\left[D_{y_{1}\dots,y_{k},y}^{k+1}F\right]\\
&=&P_{k+1}F\left(y_{1}\dots,y_{k},y\right).
\end{eqnarray*}
But as $F\in H^{n}$, $P_{k+1}F$ is zero $\mu^{\otimes k+1}$-a.e.,
and we deduce that, for $\mu$-almost all $y\in X$, $P_{k}\left(D_{y}^{1}F\right)$
is zero $\mu^{\otimes k}$-a.e..
This proves the Lemma.
\end{proof}

\subsection{A result on $\sigma$-algebras}

Let us present the key tool that will be applied
to prove the main result of this paper (Theorem \ref{thm:Main}).
\begin{thm}
\label{thm:PrimePoisson}Let $\left(X^{*},\mathcal{A}^{*},\mu^{*}\right)$
a Poisson measure and let $\Phi$ be a conditional expectation on
a $\sigma$-algebra $\mathcal{C}\subset\mathcal{A}^{*}$ that preserves
the $n$-th chaos $H^{n}$ for every $n\ge1$. If $\Phi$ is zero
on $H^{1}$, then $\Phi$ is zero on $H^{n}$ for every $n\ge1$.
In other words, $\Phi$ is the conditional expectation on the trivial
$\sigma$-algebra $\left\{ X^{*},\emptyset\right\} $.\end{thm}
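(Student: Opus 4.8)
The plan is to prove the statement by induction on $n$, using the difference operators $D^1_y$ to reduce the chaos order by one and thereby transfer information from $H^n$ down to $H^{n-1}$, where the inductive hypothesis (or the base case $n=1$) applies. The base case is the hypothesis itself: $\Phi$ is zero on $H^1$. So I would assume that $\Phi$ vanishes on $H^{n-1}$ for some $n\ge 2$ and aim to show it vanishes on $H^n$.

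The key computational idea is to commute the difference operator $D^1_y$ with the conditional expectation $\Phi$. Fix $F\in H^n$. By Lemma~\ref{lem:annihilChaos}, for $\mu$-almost every $y\in X$ the function $D^1_y F$ lies in $H^{n-1}$, and since $\Phi$ preserves $H^{n-1}$, the inductive hypothesis gives $\Phi\bigl(D^1_y F\bigr)=0$ for almost every $y$. The crucial step is then to establish a commutation relation of the form $D^1_y\bigl(\Phi F\bigr)=\Phi\bigl(D^1_y F\bigr)$ (at least up to null sets, in the sense of Lemma~\ref{lem:aeequal}). Because $\Phi$ is a conditional expectation onto $\mathcal{C}$ and the difference operator is defined pointwise by adding a particle $\delta_y$, I expect this commutation to follow from the characterization of $D^n$ through the coefficients $P_n$: applying $P_k$ to both $\Phi F$ and $F$ via the identity $P_k\bigl(D^1_y G\bigr)(y_1,\dots,y_k)=P_{k+1}G(y_1,\dots,y_k,y)$ used in the proof of Lemma~\ref{lem:annihilChaos} should let me express everything through the Fock coefficients and exploit that $\Phi$ preserves each chaos.

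Granting the commutation, I would then conclude as follows. Since $\Phi F\in H^n$ as well (because $\Phi$ preserves $H^n$), its image $D^1_y\bigl(\Phi F\bigr)$ lies in $H^{n-1}$ for a.e.\ $y$, and equals $\Phi\bigl(D^1_y F\bigr)=0$ for a.e.\ $y$. Thus $D^1_y\bigl(\Phi F\bigr)=0$ for $\mu$-almost every $y$, which via the coefficient identity forces $P_n\bigl(\Phi F\bigr)=0$, $\mu^{\otimes n}$-a.e. Since $\Phi F\in H^n$ and the only nonzero Fock coefficient of an element of $H^n$ is the $n$-th one, $P_n\bigl(\Phi F\bigr)=0$ means $\Phi F=0$. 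As $F\in H^n$ was arbitrary, $\Phi$ is zero on $H^n$, completing the induction.

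The main obstacle I anticipate is justifying the commutation $D^1_y\circ\Phi=\Phi\circ D^1_y$ rigorously at the level of $L^2$-classes. A conditional expectation is an abstract orthogonal projection onto $L^2(\mathcal{C})$, not obviously given by a pointwise formula compatible with adding a particle $\delta_y$, so one cannot naively plug in $\nu+\delta_y$. The cleanest route is probably to avoid the pointwise statement and instead verify the identity through the expectation characterization, writing $P_k\bigl(\Phi F\bigr)(y_1,\dots,y_k)=\mathbb{E}\bigl[D^k_{y_1,\dots,y_k}(\Phi F)\bigr]$ and relating it to $\Phi$ acting chaos-by-chaos; here the hypothesis that $\Phi$ preserves every $H^n$ is exactly what makes the Fock coefficients of $\Phi F$ computable in terms of those of $F$. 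Care with the almost-everywhere qualifiers (invoking Lemma~\ref{lem:aeequal}) will be needed throughout, but the structural argument is the induction above.
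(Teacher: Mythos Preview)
Your inductive scheme correctly identifies the target $D^1_y(\Phi F)=0$ for a.e.\ $y$ as the key intermediate step, but the route you propose to reach it---the commutation $D^1_y(\Phi F)=\Phi(D^1_y F)$---is a genuine gap, and your suggested workaround through the $P_k$ does not close it. Knowing that $\Phi$ preserves each $H^m$ tells you only that $\Phi|_{H^m}$ corresponds to \emph{some} operator $\tilde\Phi_m$ on $L^2_{\text{sym}}(\mu^{\otimes m})$; it gives no relation between $\tilde\Phi_n$ and $\tilde\Phi_{n-1}$. In Fock coordinates the commutation reads $n(\tilde\Phi_n f)(\cdot,\ldots,\cdot,y)=\tilde\Phi_{n-1}\bigl(n f(\cdot,\ldots,\cdot,y)\bigr)$, and there is no reason an abstract conditional expectation preserving each chaos should satisfy this. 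Your appeal to ``$P_k(\Phi F)$ computable in terms of $P_k F$'' is just the tautology $P_n(\Phi F)=\tilde\Phi_n(P_nF)$, which is exactly what you are trying to show vanishes.

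The paper does not use induction and never attempts to commute $D^1_y$ with $\Phi$. Instead it shows directly, for any $n\ge2$ and $F\in H^n$, that $\mathbb E\bigl[(D^1_a\Phi F)^2\bigr]=0$ for $\mu$-a.e.\ $a$. Two independent ingredients make this work. First, since $\Phi F\in H^n$ and $D^1_a\Phi F\in H^{n-1}$ (Lemma~\ref{lem:annihilChaos}), orthogonality of chaoses reduces the computation to $\mathbb E[(\Phi F)^2(\cdot+\delta_a)]-\mathbb E[(\Phi F)^2]$. Second---and this is the idea you are missing---Mecke's formula turns $\int_X\mathbb E[(\Phi F)^2(\cdot+\delta_x)]f(x)\mu(dx)$ into $\mathbb E\bigl[(\Phi F)^2\int f\,dN\bigr]$, and since $(\Phi F)^2$ is $\mathcal C$-measurable while $\Phi$ kills $H^1$, the term $\mathbb E[(\Phi F)^2 I^{(1)}(f)]$ vanishes. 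Only the hypothesis on $H^1$ is used, for every $n$; no inductive hypothesis on $H^{n-1}$ is needed.
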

\begin{proof}
Let $F$ be in $H^{n}$, $n\ge2$, we want to show that $\Phi F=0$.
Without loss of generality we can assume that $F$ is real.
As $\Phi F\in H^{n}$, we have $P_{k}\left(\Phi F\right)=0$ for all $k\neq n$, and therefore it is enough to show that $P_{n}\left(\Phi F\right)=0$.

We shall prove that $D_{a}^{1}\Phi F(\nu)=0$ for $\mu^{*}\otimes\mu$-a.e.$\left(\nu,a\right)$.
It will follow from Lemma \ref{lem:aeequal} that $D_{y_{1}\dots,y_{n}}^{n}\Phi F(\nu)=0$
for $\mu^{*}\otimes\mu^{\otimes n}$-a.e.$\left(\nu,y_{1},\dots,y_{n}\right)$, so  
$P_{n}\left(\Phi F\right)=0$, and the proof will be complete.

Let $a\in X$,
\begin{eqnarray*}
\mathbb{E}\left[\left(D_{a}^{1}\Phi F\right)^{2}\right]
&=&\mathbb{E}\left[\left(\Phi F\left(\cdot+\delta_{a}\right)-\Phi F\right)^{2}\right]\\
&=&\mathbb{E}\left[\Phi F\left(\cdot+\delta_{a}\right)^{2}\right]+\mathbb{E}\left[\left(\Phi F\right)^{2}\right]-2\mathbb{E}\left[\Phi F\left(\cdot+\delta_{a}\right)\Phi F\right].
\end{eqnarray*}
We have
\begin{eqnarray*}
\mathbb{E}\left[\Phi F\left(\cdot+\delta_{a}\right)\Phi F\right]
& = & \mathbb{E}\left[\left(\Phi F\left(\cdot+\delta_{a}\right)-\Phi F\right)\Phi F\right]
+\mathbb{E}\left[\left(\Phi F\right)^{2}\right]\\
& = & \mathbb{E}\left[D_{a}^{1}\Phi F
\cdot \Phi F\right]+\mathbb{E}\left[\left(\Phi F\right)^{2}\right].
\end{eqnarray*}
But, as $\Phi F$ is in $H^{n}$, $D_{a}^{1}\Phi F$ is in $H^{n-1}$
for $\mu$-almost all $a\in X$, thanks to Lemma \ref{lem:annihilChaos}.
These two vectors are therefore orthogonal which means that $\mathbb{E}\left[\Phi F\left(\cdot+\delta_{a}\right)\Phi F\right]=\mathbb{E}\left[\left(\Phi F\right)^{2}\right]$. So
\[
\mathbb{E}\left[\left(D_{a}^{1}\Phi F\right)^{2}\right]
=\mathbb{E}\left[\Phi F\left(\cdot+\delta_{a}\right)^{2}\right]-\mathbb{E}\left[\left(\Phi F\right)^{2}\right].
\]

Now, we apply Mecke's formula (\ref{eq:Palm}) with $h\left(\nu,x\right)=\left(\Phi F\right)^{2}\left(\nu\right)f\left(x\right)$,
where $f$ is a nonnegative function in $L^{1}\left(\mu\right)\cap L^{2}\left(\mu\right)$. We get
\[
\int_{X^{*}}\int_{X}\Phi F\left(\nu\right)^{2}f\left(x\right)\nu\left(dx\right)\mu^{*}\left(d\nu\right)
=\int_{X^{*}}\int_{X}\Phi F\left(\nu+\delta_{x}\right)^{2}f\left(x\right)\mu\left(dx\right)\mu^{*}\left(d\nu\right)
\]
which can be rewritten
\[
\mathbb{E}\left[\left(\Phi F\right)^{2}\int_{X}f\left(x\right)N\left(dx\right)\right]
=\int_{X}\mathbb{E}\left[\Phi F\left(\cdot+\delta_{x}\right)^{2}\right]f\left(x\right)\mu(dx).
\]
Since  $I^{\left(1\right)}\left(f\right)=\int_{X}f\left(x\right)N\left(dx\right)-\int_{X}f\left(x\right)dx$, we also have
\[
\mathbb{E}\left[\left(\Phi F\right)^{2}\int_{X}f\left(x\right)N\left(dx\right)\right]
=\mathbb{E}\left[\left(\Phi F\right)^{2}I^{\left(1\right)}\left(f\right)\right]+\int_{X}\mathbb{E}\left[\left(\Phi F\right)^{2}\right]f\left(x\right)\mu(dx).
\]
As $\Phi$ is the conditional expectation on $\mathcal{C}$, $\Phi F$
is $\mathcal{C}$-measurable and so is $\left(\Phi F\right)^{2}$. But, by assumption, $\Phi$ vanishes on $H^{1}$, which implies that
the conditional expectation of $I^{\left(1\right)}\left(f\right)$ on $\mathcal{C}$ is zero, and therefore
$\mathbb{E}\left[\left(\Phi F\right)^{2}I^{\left(1\right)}\left(f\right)\right]=0$.
Hence
\[
\mathbb{E}\left[\left(\Phi F\right)^{2}\int_{X}f\left(x\right)N\left(dx\right)\right]=\int_{X}\mathbb{E}\left[\left(\Phi F\right)^{2}\right]f\left(x\right)\mu(dx)
\]
and so we get
\[
\int_{X}\mathbb{E}\left[\left(\Phi F\right)^{2}\right]f\left(x\right)\mu(dx)=\int_{X}\mathbb{E}\left[\left(\Phi F\right)^{2}\left(\cdot+\delta_{x}\right)\right]f\left(x\right)\mu(dx).
\]
As this equality holds for any nonnegative $f\in L^{1}\left(\mu\right)\cap L^{2}\left(\mu\right)$, we obtain
\[
\mathbb{E}\left[\left(\Phi F\right)^{2}\right]=\mathbb{E}\left[\left(\Phi F\right)^{2}\left(\cdot+\delta_{x}\right)\right]
\]
for $\mu$-almost all $x\in X$.

Summing up, for $\mu$-almost all $a\in X$,
\[
\mathbb{E}\left[\left(D_{a}^{1}\Phi F\right)^{2}\right]=0
\]
and thus $D_{a}^{1}\Phi F\left(\nu\right)=0$ $\mu^{*}\otimes\mu$-a.e..
\end{proof}

\section{Poisson suspensions}

If $T$ is a measure preserving automorphism of $\left(X,\mathcal{A},\mu\right)$,
then $T_{*}:\nu\mapsto\nu\circ T^{-1}$ is
a measure preserving automorphism of $\left(X^{*},\mathcal{A}^{*},\mu^{*}\right)$.
$\left(X^{*},\mathcal{A}^{*},\mu^{*},T_{*}\right)$ is the \emph{Poisson
suspension} over the \emph{base} $\left(X,\mathcal{A},\mu,T\right)$.

Let us recall the most basic ergodic result about Poisson suspensions
(see \cite{Roy07Infinite} for a proof):
\begin{thm*}
\textup{$\left(X^{*},\mathcal{A}^{*},\mu^{*},T_{*}\right)$ is ergodic
(and then weakly mixing) if and only if $\left(X,\mathcal{A},\mu,T\right)$
has no $T$-invariant set $A$ with $0<\mu\left(A\right)<+\infty$.}
\end{thm*}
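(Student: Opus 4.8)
The plan is to reformulate ergodicity spectrally through the Fock space structure and then to exploit a feature special to the infinite measure setting. Throughout, write $U_{T_*}F:=F\circ T_*$ for the Koopman operator of $T_*$ on $L^2(\mu^*)$ and $U_Tf:=f\circ T$ for that of $T$ on $L^2(\mu)$. Since $\mu^*$ is a probability measure and $T_*$ preserves it, ergodicity of $\left(X^*,\mathcal A^*,\mu^*,T_*\right)$ is equivalent to saying that the only $U_{T_*}$-invariant elements of $L^2(\mu^*)$ are the constants. The operator $U_{T_*}$ respects the decomposition $L^2(\mu^*)\simeq\bigoplus_{n\ge0}H^n$: under the identification $H^n\simeq L^2_{\mathrm{sym}}(\mu^{\otimes n})$ it is the restriction of the $n$-fold tensor power $U_T^{\otimes n}$ to the symmetric subspace. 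Consequently an invariant $F=\sum_n F_n$ must be invariant chaos by chaos, and $T_*$ fails to be ergodic if and only if $U_T^{\otimes n}$ admits a nonzero invariant vector in $L^2_{\mathrm{sym}}(\mu^{\otimes n})$ for some $n\ge1$; on $H^1\simeq L^2(\mu)$ the operator is simply $U_T$.

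Next I would set up the dictionary between the hypothesis on $T$ and the spectrum of $U_T$. On the one hand, if $A$ is $T$-invariant with $0<\mu(A)<\infty$, then $\mathbf 1_A\in L^1(\mu)\cap L^2(\mu)$ and $I^{(1)}(\mathbf 1_A)$ is a nonzero $U_{T_*}$-invariant vector lying in $H^1$, so $T_*$ is not ergodic. On the other hand --- and this is the point I expect to be decisive --- if $U_Tf=\lambda f$ for some $\lambda\in\mathbb T$ and some nonzero $f\in L^2(\mu)$, then $|f|\circ T=|f|$, so for a suitable $\varepsilon>0$ the set $\{|f|>\varepsilon\}$ is $T$-invariant, of finite measure (because $f\in L^2$) and of positive measure (because $f\neq0$). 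Hence any eigenvalue of $U_T$, not merely the eigenvalue $1$, produces a $T$-invariant set of finite positive measure. The hypothesis that $T$ has no such set is therefore equivalent to $U_T$ having \emph{no eigenvalue at all}, that is, to the maximal spectral type $\sigma$ of $U_T$ on the circle $\mathbb T$ being continuous.

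With this equivalence the remaining implication is short. Assume $T$ has no invariant set of finite positive measure, so $\sigma$ is continuous. The maximal spectral type of $U_T^{\otimes n}$ is governed by the $n$-fold convolution $\sigma^{*n}$, and convolving a continuous measure with any probability measure yields a continuous measure; thus $\sigma^{*n}$ is continuous for every $n\ge1$. In particular $U_T^{\otimes n}$ has no eigenvalue $1$, hence no nonzero invariant vector, for every $n\ge1$, and by the first paragraph $T_*$ is ergodic. The same computation yields the weak mixing assertion: ergodicity forces $\sigma$ continuous, so $U_{T_*}$ restricted to $\bigoplus_{n\ge1}H^n=L^2(\mu^*)\ominus\mathbb C$ has continuous spectral type, which is exactly weak mixing of $T_*$.

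The routine parts of this plan are the passage from ergodicity to the absence of nonconstant invariant functions and the behaviour of $U_{T_*}$ on the Fock space. The genuine content, and the step I would treat most carefully, is the two-sided identification of the hypothesis with continuity of the spectral type of $U_T$ --- in particular the observation that in infinite measure \emph{every} eigenvalue of $U_T$ yields a finite invariant set via the modulus --- together with the stability of continuity under convolution that propagates it to all tensor powers.
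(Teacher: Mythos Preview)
The paper does not actually prove this statement; it merely recalls it and refers to \cite{Roy07Infinite} for a proof. Your argument is correct and is in fact the standard spectral route, fully consistent with the Fock-space facts the paper records in Section~3.3 (in particular that $U_{T_*}$ acts on $H^n$ as $U_T^{\odot n}$ with maximal spectral type $\sigma^{*n}$). The crucial step---observing that in infinite measure any $L^2$-eigenfunction of $U_T$ produces, via the level sets of its modulus, a $T$-invariant set of finite positive measure, so that the hypothesis is equivalent to continuity of $\sigma$---is exactly right, and continuity then propagates to all $\sigma^{*n}$ by convolution, giving both ergodicity and weak mixing at once.
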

In particular, if $T$ is ergodic and $\mu$ is infinite, then $T_{*}$
is ergodic.

The ergodic theory of Poisson suspension deals with the interplay
between a dynamical system with a $\sigma$-finite measure $\left(X,\mathcal{A},\mu,T\right)$
and the canonically built probability measure preserving system $\left(X^{*},\mathcal{A}^{*},\mu^{*},T_{*}\right)$.

Two directions are to be considered: Poisson suspensions can be seen
as a probabilistic tool to study infinite ergodic theory and the other direction
is to look them as a family of probabilistic systems
indexed by infinite measure preserving ones and see what kind of properties
we get. This paper belongs to the latter category.

\subsection{Poissonian factors}

There are two main ways to obtain natural factors of a Poisson suspension
$\left(X^{*},\mathcal{A}^{*},\mu^{*},T_{*}\right)$.

First assume you can find a $T$-invariant measurable set $A\subset X$,
of positive measure. Then the Poisson measure restricted to $A$
is such a factor. Indeed, the map 
\begin{eqnarray*}
X^{*} & \to & A^{*}\\
\nu & \mapsto & \nu_{\mid A}
\end{eqnarray*}
realizes a factor map between $\left(X^{*},\mathcal{A}^{*},\mu^{*},T_{*}\right)$
and $\left(A^{*},\left(\mathcal{A}_{\mid A}\right)^{*},\left(\mu_{\mid A}\right)^{*},\left(T_{\mid A}\right)^{*}\right)$.

In terms of $\sigma$-algebra, the above factor corresponds to 
\[
\sigma\left\{ N\left(C\right),\: C\in\mathcal{A},\: C\subset A\right\} \subset\mathcal{A}^{*}.
\]

The second way consists in considering $\sigma$-finite factors of
the base (we mean $T$-invariant $\sigma$-algebras $\mathcal{B}\subset\mathcal{A}$ such that $\mu_{|\mathcal{B}}$ remains $\sigma$-finite).
Namely, if $\mathcal{B}$ is such a $\sigma$-finite
factor, we have the factor map
\[
\left(X,\mathcal{A},\mu,T\right)\begin{array}[b]{c}
\psi\\
\to
\end{array}\left(X_{\diagup\mathcal{B}},\mathcal{A}_{\diagup\mathcal{B}},\mu_{\diagup\mathcal{B}},T_{\diagup\mathcal{B}}\right)
\]
and if we define $\psi_{*}$ by $\nu\mapsto\nu\circ\psi^{-1}$
we obtain the factor relationship at the level of the Poisson suspensions:
\[
\left(X^{*},\mathcal{A}^{*},\mu^{*},T_{*}\right)\begin{array}[b]{c}
\psi_{*}\\
\to
\end{array}\left(\left(X_{\diagup\mathcal{B}}\right)^{*},\left(\mathcal{A}_{\diagup\mathcal{B}}\right)^{*},\left(\mu_{\diagup\mathcal{B}}\right)^{*},\left(T_{\diagup\mathcal{B}}\right)^{*}\right)
\]
In terms of $\sigma$-algebra, it corresponds to $\mathcal{B}^{*}:=\sigma\left\{ N\left(C\right),\: C\in\mathcal{B}\right\} \subset\mathcal{A}^{*}$.

A Poissonian factor is a combination of both situations which is obtained
by first considering a $T$-invariant subset $A\subset X$ and then
considering a $\sigma$-finite factor $\mathcal{B}$ of the restricted
system $\left(A,\mathcal{A}_{\mid A},\mu_{\mid A},T_{\mid A}\right)$.
We also consider the trivial factor $\left\{ \emptyset,X^{*}\right\} $
as a Poissonian factor.

In particular, if $\left(X,\mathcal{A},\mu,T\right)$ is ergodic, then
the only Poissonian factors of $\left(X^{*},\mathcal{A}^{*},\mu^{*},T_{*}\right)$
are:
\begin{itemize}
\item the trivial factor $\left\{ \emptyset,X^{*}\right\} $;
\item $\mathcal{B}^{*}$, for a $\sigma$-finite factor $\mathcal{B}\subset\mathcal{A}$.
\end{itemize}

We shall need a result from \cite{Roy07Infinite}. We recall that a {\em sub-Markov} operator on $L^{2}\left(\mu\right)$ is a positive operator
$\Phi$ such that $\Phi f\le 1$ and $\Phi^{*}f\le 1$, for $0\le f \le 1$.

\begin{prop}
\label{pro:conditionalexpectation}Let $\mathcal{C}\subset\mathcal{A}^{*}$
be a factor of $\left(X^{*},\mathcal{A}^{*},\mu^{*},T_{*}\right)$
and $\Phi$ the corresponding conditional expectation. Assume moreover
that $\Phi$ preserves the first chaos $H^{1}$ and does not vanish
on $H^{1}$. Then:
\begin{itemize}
\item $\Phi$ induces on $L^{2}\left(\mu\right)$ a sub-Markov operator
$\Psi$.
\item There exists a $T$-invariant set $A\subset X$ such that $\Psi$
restricted to $L^{2}\left(\mu_{\mid A}\right)$ is a conditional expectation
on a $\sigma$-finite factor $\mathcal{G}\subset\mathcal{A}_{\mid A}$
and vanishes on $L^{2}\left(\mu_{\mid A^{c}}\right)$.
\end{itemize}
\end{prop}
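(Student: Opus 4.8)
The plan is to analyse the operator that $\Phi$ induces on the first chaos and to identify it, via the Poisson product formula, as a conditional expectation. First I would set up the induced operator. Since $\Phi$ is the conditional expectation onto $\mathcal{C}$, it is the orthogonal projection of $L^{2}\left(\mu^{*}\right)$ onto $L^{2}\left(\mathcal{C}\right)$, hence self-adjoint and idempotent. By hypothesis $\Phi$ preserves $H^{1}$, and because it is self-adjoint it also preserves $\left(H^{1}\right)^{\perp}=\bigoplus_{n\neq1}H^{n}$; consequently $\Phi$ commutes with the projection $P_{1}$ onto $H^{1}$. Transporting through the isometry $I^{\left(1\right)}:L^{2}\left(\mu\right)\to H^{1}$, I define $\Psi$ on $L^{2}\left(\mu\right)$ by $I^{\left(1\right)}\left(\Psi f\right)=\Phi\,I^{\left(1\right)}\left(f\right)$. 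Then $\Psi$ inherits self-adjointness and idempotency, so it is an orthogonal projection on $L^{2}\left(\mu\right)$; and since $\mathcal{C}$ is $T_{*}$-invariant, $\Phi$ commutes with $U_{T_{*}}:F\mapsto F\circ T_{*}$, whose restriction to $H^{1}$ corresponds under $I^{\left(1\right)}$ to $f\mapsto f\circ T$, so $\Psi$ commutes with the latter.

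The heart of the argument is to show that $\mathrm{Range}\left(\Psi\right)$ is an algebra, since positivity of a self-adjoint projection is by itself too weak (the rank-one projection $f\mapsto\left\langle f,u\right\rangle u$ onto a non-indicator $u\geq0$ is positive but not sub-Markov). Taking $f,g\in\mathrm{Range}\left(\Psi\right)$ with enough integrability (say $f,g\in L^{1}\cap L^{2}\cap L^{4}\left(\mu\right)$), the functions $I^{\left(1\right)}\left(f\right),I^{\left(1\right)}\left(g\right)$ are $\mathcal{C}$-measurable, hence so is their product, and the Poisson product formula gives
\[
I^{\left(1\right)}\left(f\right)I^{\left(1\right)}\left(g\right)=I^{\left(2\right)}\left(f\otimes g\right)+I^{\left(1\right)}\left(fg\right)+\int_{X}fg\,d\mu,
\]
whose first-chaos part is $I^{\left(1\right)}\left(fg\right)$. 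Applying $\Phi$, using $\Phi P_{1}=P_{1}\Phi$ and that $\Phi$ fixes the $\mathcal{C}$-measurable product, one obtains $\Phi\,I^{\left(1\right)}\left(fg\right)=I^{\left(1\right)}\left(fg\right)$, that is $\Psi\left(fg\right)=fg$. Thus the bounded elements of $\mathrm{Range}\left(\Psi\right)$ form a subalgebra, and approximating $t\mapsto\left|t\right|$ by polynomials shows it is also a sublattice. A separate, simpler computation gives positivity of $\Psi$: applying $\Phi$ to the Laplace functionals $e^{-s\int f\,dN}$ (for $f\geq0$ in $L^{1}\cap L^{2}$), which satisfy $0\leq\Phi\left(e^{-s\int f\,dN}\right)\leq1$, and differentiating at $s=0$ yields $\int_{X}f\,d\mu+I^{\left(1\right)}\left(\Psi f\right)\geq0$ almost surely, an almost-sure lower bound on $\sum_{i}\Psi f\left(x_{i}\right)$ that forces $\Psi f\geq0$ $\mu$-almost everywhere.

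Finally I would identify $\Psi$ and transfer the dynamics. A closed, self-adjoint subalgebra and sublattice of $L^{2}\left(\mu\right)$ that is the range of a positive projection equals $L^{2}\left(A,\mathcal{G},\mu\right)$ for a measurable set $A$ and a sub-$\sigma$-algebra $\mathcal{G}\subset\mathcal{A}_{\mid A}$ (a Moy-type identification; here $\mu_{\mid\mathcal{G}}$ is $\sigma$-finite by exhausting $A$ with finite-measure sets). The orthogonal projection onto this subspace is exactly the conditional expectation $f\mapsto\mathbb{E}\left[f\mid\mathcal{G}\right]$ on $A$, vanishing on $A^{c}$; being a conditional expectation it is sub-Markov, which gives the first bullet, and its explicit form gives the second. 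Since $\Psi$ commutes with $f\mapsto f\circ T$ and $T$ is measure preserving, this unitary permutes $\mathrm{Range}\left(\Psi\right)$, so $A$ is $T$-invariant and $\mathcal{G}$ is a $T$-invariant $\sigma$-finite factor of $\left(A,\mathcal{A}_{\mid A},\mu_{\mid A},T_{\mid A}\right)$, as required.

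The main obstacle is this last identification: turning the algebra and lattice structure of $\mathrm{Range}\left(\Psi\right)$ into a genuine conditional expectation while controlling $\sigma$-finiteness, together with the integrability bookkeeping needed to legitimately invoke the product formula on a large enough class of $f,g$. By contrast, the positivity computation and the equivariance of $A$ and $\mathcal{G}$ are comparatively routine once the algebra property is in hand.
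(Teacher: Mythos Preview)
The paper does not actually prove this proposition: it is quoted verbatim from \cite{Roy07Infinite}, so there is no in-paper argument to compare against. That said, your outline is essentially the argument one finds in the cited reference, and the central idea---using the Poisson product formula
\[
I^{(1)}(f)\,I^{(1)}(g)=I^{(2)}(f\otimes g)+I^{(1)}(fg)+\int_{X}fg\,d\mu
\]
together with $\Phi P_{1}=P_{1}\Phi$ (from self-adjointness of $\Phi$ and the hypothesis $\Phi H^{1}\subset H^{1}$) to conclude that the bounded part of $\mathrm{Range}(\Psi)$ is a subalgebra---is exactly the right mechanism.

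Two comments on the details. First, your separate positivity computation via Laplace functionals is the weakest link and, in fact, dispensable. After differentiating you obtain $\mu(f)+I^{(1)}(\Psi f)\ge 0$ a.s., but to interpret $I^{(1)}(\Psi f)$ as $\int\Psi f\,dN-\mu(\Psi f)$ and then argue pointwise you would need $\Psi f\in L^{1}(\mu)$, which you do not yet know. You do not need this step: once the bounded elements of $\mathrm{Range}(\Psi)$ form a closed $*$-subalgebra (hence a sublattice, by your polynomial approximation of $|t|$), the standard identification already gives $\mathrm{Range}(\Psi)=L^{2}(A,\mathcal{G},\mu)$, and the orthogonal projection onto this subspace \emph{is} the conditional expectation $f\mapsto 1_{A}\,\mathbb{E}[f\mid\mathcal{G}]$, which is automatically positive and sub-Markov. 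Second, the $\sigma$-finiteness of $\mu_{\mid\mathcal{G}}$, which you correctly flag as the real obstacle, follows from the algebra/lattice property itself: for any nonzero $f\in\mathrm{Range}(\Psi)$ the sets $\{|f|>1/n\}$ lie in $\mathcal{G}$ and have finite $\mu$-measure, and a countable $L^{2}$-dense family of such $f$ yields a $\mathcal{G}$-measurable exhaustion of $A$ by finite-measure sets.
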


\subsection{\label{sub:Superposition-of-Poisson-Measures}Superposition of Poisson
suspensions}
We shall need later (for Propositions \ref{prop:sym} and \ref{prop:mutimeslambda}) an easy and classical fact about Poisson measures which says that if we consider two independent Poisson measures living on the same base, with intensities $\mu_{1}$ and $\mu_{2}$, the superposition of Poisson suspensions, that is the full collection of particles coming from
both systems gives birth to a Poisson measure with intensity $\mu_{1}+\mu_{2}$.
More precisely, we have the following map:
\begin{eqnarray*}
 & \left(X^{*}\times X^{*},\mathcal{A}^{*}\otimes\mathcal{A}^{*},\mu_{1}^{*}\otimes\mu_{2}^{*}\right)\\
 & \downarrow\Psi\\
 & \left(X^{*},\mathcal{A}^{*},\left(\mu_{1}+\mu_{2}\right)^{*}\right)
\end{eqnarray*}
defined by
\[
\Psi\left(\nu_{1},\nu_{2}\right)=\nu_{1}+\nu_{2}
\]

From an ergodic point of view, as $\Psi\circ\left(T_{*}\times T_{*}\right)=T_{*}\circ\Psi$,
the Poisson suspension $\left(X^{*},\mathcal{A}^{*},\left(\mu_{1}+\mu_{2}\right)^{*},T_{*}\right)$
can be seen as a factor of the direct product 
$\left(X^{*},\mathcal{A}^{*},\mu_{1}^{*},T_{*}\right)\times\left(X^{*},\mathcal{A}^{*},\mu_{2}^{*},T_{*}\right)$. If $\mu_{1}=\mu_{2}=\mu$, as $\Psi$ is symmetric, it is a factor
of the symmetric factor of this direct product (the $\sigma$-algebra
$\mathcal{A}^{*}\odot\mathcal{A}^{*}\subset\mathcal{A}^{*}\otimes\mathcal{A}^{*}$
generated by symmetric functions on $\left(X^{*}\times X^{*},\mathcal{A}^{*}\otimes\mathcal{A}^{*},\mu^{*}\otimes\mu^{*}\right)$).

\subsection{Unitary operators $U_{T}$ and $U_{T_{*}}$}

$T$ acts unitarily on $L^{2}\left(\mu\right)$ by $U_{T}:f\mapsto f\circ T$
and so does $T_{*}$ on $L^{2}\left(\mu^{*}\right)$ by $U_{T_{*}}:F\mapsto F\circ T_{*}$.
Each chaos is preserved by $U_{T_{*}}$ and, through the above identification,
it is easy to see that it corresponds to $U_{T}$ on $H^{1}\simeq L^{2}\left(\mu\right)$,
and more generally to $U_{T}^{\odot n}$ (the $n$-th symmetric tensor power of $U_{T}$) on $H^{n}\simeq L_{\text{sym}}^{2}\left(\mu^{\otimes n}\right)$.

If $\sigma$ is the maximal spectral type of $U_{T}$ on $L^{2}\left(\mu\right)$
then $\sigma^{*n}$ is the maximal spectral type of $U_{T}^{\odot n}$
on $L_{\text{sym}}^{2}\left(\mu^{\otimes n}\right)$.

We shall need another definition:
\begin{defn}
A Poisson suspension is said to have the {\em property CP} (for ``chaos-preserving'')
if any conditional expectation with respect to a factor preserves
each chaos $H^{n}$.
\end{defn}
Below is the main situation where we obtain this property.
\begin{example}
If the maximal spectral type $\sigma$ of $\left(X,\mathcal{A},\mu,T\right)$
satisfies $\sigma^{*n}\perp\sigma^{*m}$, for all distinct $n$, $m\in\mathbb{N}^{*}$,
then $\left(X^{*},\mathcal{A}^{*},\mu^{*},T_{*}\right)$
has property CP (see \cite{LemParThou00Gausselfjoin} for the proof
in the Gaussian case, the Poissonian one is completely analogous).
\end{example}

\section{\label{sec:The-Main-result}Prime Poisson Suspensions}
\subsection{Main result}
We have to extend the definition of prime systems to the case of an infinite measure.
The difference with the probability measure case resides in the fact
that the trivial factor $\left\{ X,\emptyset\right\} $ is no longer
$\sigma$-finite factor when the measure is infinite.
\begin{defn}
An ergodic system $\left(X,\mathcal{A},\mu,T\right)$ with an infinite
measure is said to be \emph{prime} if $\mathcal{A}$ is its only $\sigma$-finite
factor.
\end{defn}
We are now able to prove the main result of the paper.
We shall give examples of such systems in the last section.
\begin{thm}
\label{thm:Main}Let $\left(X,\mathcal{A},\mu,T\right)$ be an ergodic
infinite measure preserving system such that $\left(X^{*},\mathcal{A}^{*},\mu^{*},T_{*}\right)$
has property CP. If $\mathcal{C}\subset\mathcal{A}^{*}$ is a non-trivial
factor, then it contains a non-trivial Poissonian factor. In particular,
if we assume moreover that $\left(X,\mathcal{A},\mu,T\right)$ is
prime, then \textup{$\left(X^{*},\mathcal{A}^{*},\mu^{*},T_{*}\right)$
is prime.}\end{thm}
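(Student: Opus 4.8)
The plan is to let $\Phi$ denote the conditional expectation associated with the factor $\mathcal{C}$, and to exploit property CP, which guarantees that $\Phi$ preserves each chaos $H^{n}$. First I would dichotomize according to the behaviour of $\Phi$ on the first chaos $H^{1}$. If $\Phi$ were to vanish on $H^{1}$, then Theorem \ref{thm:PrimePoisson} would apply directly, its hypotheses being exactly that $\Phi$ preserve every $H^{n}$ and vanish on $H^{1}$; it would force $\Phi$ to vanish on every $H^{n}$ with $n\ge1$, meaning $\Phi$ is the conditional expectation onto the trivial $\sigma$-algebra, i.e.\ $\mathcal{C}$ is trivial, contrary to hypothesis. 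Hence for a non-trivial $\mathcal{C}$ the operator $\Phi$ cannot vanish on $H^{1}$.

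In the remaining case $\Phi$ preserves $H^{1}$ (by CP) and does not vanish there, so Proposition \ref{pro:conditionalexpectation} applies: $\Phi$ induces a sub-Markov operator $\Psi$ on $L^{2}\left(\mu\right)$, and there is a $T$-invariant set $A\subset X$ such that $\Psi$ restricted to $L^{2}\left(\mu_{\mid A}\right)$ is the conditional expectation onto a $\sigma$-finite factor $\mathcal{G}\subset\mathcal{A}_{\mid A}$, while $\Psi$ vanishes on $L^{2}\left(\mu_{\mid A^{c}}\right)$. Here I would invoke the ergodicity of the base together with the fact that $\mu$ is infinite: a $T$-invariant set has measure $0$ or co-null complement. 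Since $\Psi$ is not identically zero, $A$ cannot be null, so $A^{c}$ is null and $A=X$ up to a $\mu$-null set. Thus $\Psi$ is simply the conditional expectation onto a $\sigma$-finite factor $\mathcal{G}\subset\mathcal{A}$ of the whole base.

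The key step, which I expect to be the main point, is then to upgrade this first-chaos information to the full Poissonian factor $\mathcal{G}^{*}$. For any $C\in\mathcal{G}$ with $\mu\left(C\right)<+\infty$, the indicator $\mathbf{1}_{C}$ is fixed by the conditional expectation $\Psi$, hence $I^{\left(1\right)}\left(\mathbf{1}_{C}\right)\in H^{1}$ is fixed by $\Phi$ and therefore $\mathcal{C}$-measurable. Because $N\left(C\right)=I^{\left(1\right)}\left(\mathbf{1}_{C}\right)+\mu\left(C\right)$ (a genuine $L^{2}$ variable, being Poisson-distributed), the counting variable $N\left(C\right)$ is $\mathcal{C}$-measurable for every such $C$. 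Since $\mu_{\mid\mathcal{G}}$ is $\sigma$-finite, the finite-measure sets in $\mathcal{G}$ generate $\mathcal{G}$, and the associated variables generate $\mathcal{G}^{*}=\sigma\left\{ N\left(C\right),\:C\in\mathcal{G}\right\} $ (an infinite-measure set being an increasing union of finite-measure ones $C_{n}$, with $N\left(C\right)=\lim_{n}N\left(C_{n}\right)$). Consequently $\mathcal{G}^{*}\subset\mathcal{C}$, and $\mathcal{G}^{*}$ is a non-trivial Poissonian factor since $\Psi\neq0$ forces $\mathcal{G}$ to contain a set of positive finite measure. This establishes the first assertion.

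Finally, for the ``in particular'' statement, I would assume the base is prime, so that its only $\sigma$-finite factor is $\mathcal{A}$ itself. Applying the above to an arbitrary non-trivial factor $\mathcal{C}$, the factor $\mathcal{G}$ produced must equal $\mathcal{A}$, whence $\mathcal{G}^{*}=\mathcal{A}^{*}$ and $\mathcal{A}^{*}\subset\mathcal{C}\subset\mathcal{A}^{*}$, i.e.\ $\mathcal{C}=\mathcal{A}^{*}$. Therefore $\left(X^{*},\mathcal{A}^{*},\mu^{*},T_{*}\right)$ admits no non-trivial factor and is prime.
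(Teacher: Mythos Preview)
Your proof is correct and follows essentially the same route as the paper's own argument: the same dichotomy on whether $\Phi$ vanishes on $H^{1}$, the same appeal to Theorem~\ref{thm:PrimePoisson} in one case and Proposition~\ref{pro:conditionalexpectation} in the other, and the same passage from $\Psi\mathbf{1}_{C}=\mathbf{1}_{C}$ to $N(C)$ being $\mathcal{C}$-measurable. If anything you are slightly more explicit than the paper, spelling out why ergodicity forces $A=X$, why the finite-measure sets generate $\mathcal{G}^{*}$, and why primeness of the base yields $\mathcal{G}=\mathcal{A}$.
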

\begin{proof}
Let $\mathcal{C}$ be a non-trivial $T_{*}$-invariant $\sigma$-algebra
included in $\mathcal{A}^{*}$ and $\Phi$ the corresponding conditional
expectation, it preserves $H^{1}$ thanks to property CP.

Assume firstly that $\Psi$ vanishes on $H^{1}$, then we can apply
Theorem \ref{thm:PrimePoisson} to conclude that
$\mathcal{C}=\left\{ X^{*},\emptyset\right\} $ which is impossible
as we have assumed $\mathcal{C}$ to be a non-trivial factor.

Now if $\Psi$ doesn't vanish on $H^{1}$ we can apply Proposition \ref{pro:conditionalexpectation}, combined with the ergodicity of $T$ to deduce that $\Phi$ induces on $L^{2}\left(\mu\right)$ a sub-Markov operator $\Psi$ which is also conditional expectation on a $\sigma$-finite factor $\mathcal{T}$. The image of $\Psi$
contains all the indicator functions of finite measure sets contained
in $\mathcal{T}$. Coming back to $L^{2}\left(\mu^{*}\right)$, the image of $\Phi$ contains
all the vectors $I^{(1)}(1_A)=N\left(A\right)-\mu\left(A\right)$, for
$A\in\mathcal{T}$ of finite measure, which are therefore $\mathcal{C}$-measurable.
This proves that $\mathcal{C}$ contains the Poissonian factor $\mathcal{T}^{*}$.
\end{proof}
\begin{rem} The conditions of the Theorem are also necessary in order that the Poisson suspension be prime: then, we have no non-trivial conditional expectation with respect to a factor, so property CP holds obviously, and there is no proper non-trivial Poissonian factor so $\left(X,\mathcal{A},\mu,T\right)$ must be prime.
\end{rem}
\subsection{Some consequences}
We mentionned in the Introduction that when $T$ is MSJ(2), then $T\odot T$, the
symmetric factor of the direct product $T\times T$, is prime and
so is the map $\left(x,y\right)\mapsto\left(y,Tx\right)$ with respect
to the product measure. It is therefore natural to ask if this is
the case in our context.
\begin{prop}
\label{prop:sym}$T_{*}\odot T_{*}$ is never prime.\end{prop}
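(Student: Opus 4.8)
The plan is to exhibit an explicit non-trivial proper factor of $T_{*}\odot T_{*}$, which by definition will show that it is not prime. The natural candidate comes directly from the superposition construction of Section \ref{sub:Superposition-of-Poisson-Measures}. Taking $\mu_{1}=\mu_{2}=\mu$, the symmetric map $\Psi(\nu_{1},\nu_{2})=\nu_{1}+\nu_{2}$ intertwines $T_{*}\times T_{*}$ with $T_{*}$ acting on $(X^{*},\mathcal{A}^{*},(2\mu)^{*})$, and since $\Psi$ is symmetric it factors through the symmetric $\sigma$-algebra $\mathcal{A}^{*}\odot\mathcal{A}^{*}$. Thus $(X^{*},\mathcal{A}^{*},(2\mu)^{*},T_{*})$ is already known to be a factor of $T_{*}\odot T_{*}$, corresponding to the sub-$\sigma$-algebra $\mathcal{S}:=\Psi^{-1}(\mathcal{A}^{*})\subset\mathcal{A}^{*}\odot\mathcal{A}^{*}$. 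So it remains only to check that $\mathcal{S}$ is neither trivial nor all of the symmetric factor.

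Non-triviality I would dispatch immediately: since $\mu$, and hence $2\mu$, is infinite and continuous, the Poisson suspension over it is a genuine non-degenerate system, as for instance $N(A)$ is non-constant for any $A$ with $0<\mu(A)<\infty$; hence $\mathcal{S}$ is strictly larger than $\{X^{*}\times X^{*},\emptyset\}$.

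The heart of the argument, and the only delicate point, is to show that $\mathcal{S}$ is a \emph{proper} sub-$\sigma$-algebra of $\mathcal{A}^{*}\odot\mathcal{A}^{*}$, i.e. that the superposition genuinely discards information retained by the symmetric factor. I would fix $A\in\mathcal{A}$ with $0<\mu(A)<\infty$ and consider the function $(\nu_{1},\nu_{2})\mapsto\nu_{1}(A)\,\nu_{2}(A)$. It is symmetric and $\mathcal{A}^{*}\otimes\mathcal{A}^{*}$-measurable, hence $\mathcal{A}^{*}\odot\mathcal{A}^{*}$-measurable, and I claim it is not $\mathcal{S}$-measurable. Indeed, $\mathcal{S}$-measurable functions are exactly the functions of the total $\nu_{1}+\nu_{2}$, whereas under $\mu^{*}\otimes\mu^{*}$ the conditional law of $(\nu_{1},\nu_{2})$ given $\nu_{1}+\nu_{2}$ is obtained by labelling each point of $\nu_{1}+\nu_{2}$ independently with probability $1/2$ (the elementary splitting property of Poisson measures for equal intensities). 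Consequently, conditionally on $(\nu_{1}+\nu_{2})(A)=2$, the product $\nu_{1}(A)\,\nu_{2}(A)$ equals $1$ when the split is $(1,1)$ and $0$ when it is $(0,2)$ or $(2,0)$, each occurring with positive conditional probability; so this product is not a function of $\nu_{1}+\nu_{2}$, giving $\mathcal{S}\subsetneq\mathcal{A}^{*}\odot\mathcal{A}^{*}$.

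Combining the three points, $\mathcal{S}$ is a non-trivial proper $T_{*}\odot T_{*}$-factor, so $T_{*}\odot T_{*}$ is not prime. The main obstacle is precisely the properness step, namely certifying that $\Psi$ is strictly non-injective at the level of $\sigma$-algebras rather than merely as a point map; the splitting description of the conditional law is what makes this transparent, and it in fact shows the information loss to be substantial rather than marginal.
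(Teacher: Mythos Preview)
Your argument is correct. You identify the same candidate factor as the paper, namely $\mathcal{S}=\Psi^{-1}(\mathcal{A}^{*})$ coming from the superposition map $\Psi(\nu_{1},\nu_{2})=\nu_{1}+\nu_{2}$, and then verify directly that $\mathcal{S}$ is proper by exhibiting the symmetric function $\nu_{1}(A)\nu_{2}(A)$ and using the binomial splitting of two i.i.d.\ Poisson variables given their sum. This is entirely valid; in fact you only need the elementary one-dimensional statement that if $M,M'$ are independent Poisson$(\mu(A))$ then $MM'$ is not $\sigma(M+M')$-measurable, which already follows from the event $\{M+M'=2\}$.

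The paper reaches the same conclusion by a different, dynamical route. It realizes the product $T_{*}\times T_{*}$ as the Poisson suspension over $(X\times X,\mu\otimes\delta_{\infty}+\delta_{\infty}\otimes\mu,T\times T)$, observes that $(X^{*},(2\mu)^{*},T_{*})$ arises there as a Poissonian factor, and then invokes the ergodicity of the relatively independent self-joining over a Poissonian factor to deduce that the extension $T_{*}\times T_{*}\to (X^{*},(2\mu)^{*},T_{*})$ is relatively weakly mixing. Since $T_{*}\times T_{*}\to T_{*}\odot T_{*}$ is a (non-trivial) compact extension, the two cannot coincide, forcing $\mathcal{S}\subsetneq\mathcal{A}^{*}\odot\mathcal{A}^{*}$. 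Your approach is more elementary and purely measure-theoretic, requiring no ergodic machinery; the paper's approach, while heavier, yields additional structural information, namely that $T_{*}\odot T_{*}$ sits as an intermediate compact extension of $(X^{*},(2\mu)^{*},T_{*})$ inside a relatively weakly mixing one.
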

\begin{proof}
With the result on the superposition of two independent Poisson
measures recalled in Section \ref{sub:Superposition-of-Poisson-Measures},
we obtain the scheme
\begin{eqnarray*}
 & \left(X^{*}\times X^{*},\mathcal{A}^{*}\otimes\mathcal{A}^{*},\mu^{*}\otimes\mu^{*},T_{*}\times T_{*}\right)\\
 & \downarrow\\
 & \left(X^{*}\times X^{*},\mathcal{A}^{*}\odot\mathcal{A}^{*},\mu^{*}\otimes\mu^{*},T_{*}\odot T_{*}\right)\\
 & \downarrow\\
 & \left(X^{*},\mathcal{A}^{*},\left(2\mu\right)^{*},T_{*}\right)
\end{eqnarray*}
The direct product $\left(X^{*}\times X^{*},\mathcal{A}^{*}\otimes\mathcal{A}^{*},\mu^{*}\otimes\mu^{*},T_{*}\times T_{*}\right)$
can be thought as the Poisson suspension
\[
\left(\left(X\times X\right)^{*},\left(\mathcal{A}\otimes\mathcal{A}\right)^{*},\left(\mu\otimes\delta_{\infty}+\delta_{\infty}\otimes\mu\right)^{*},\left(T\times T\right)_{*}\right)
\]
where $\infty$ is an artificially added point in $X$, fixed by $T$.
In this way, $\left(X^{*},\mathcal{A}^{*},\left(2\mu\right)^{*},T_{*}\right)$
appears as a Poissonian factor (corresponding to the symmetric factor of $X\times X$), and we know that the corresponding
relatively independent joining is ergodic (see \cite{Roy07Infinite}). It follows that
\begin{eqnarray*}
 & \left(X^{*}\times X^{*},\mathcal{A}^{*}\otimes\mathcal{A}^{*},\mu^{*}\otimes\mu^{*},T_{*}\times T_{*}\right)\\
 & \downarrow\\
 & \left(X^{*},\mathcal{A}^{*},\left(2\mu\right)^{*},T_{*}\right)
\end{eqnarray*}
is a relatively weakly mixing extension. 

However we know that
\begin{eqnarray*}
 & \left(X^{*}\times X^{*},\mathcal{A}^{*}\otimes\mathcal{A}^{*},\mu^{*}\otimes\mu^{*},T_{*}\times T_{*}\right)\\
 & \downarrow\\
 & \left(X^{*}\times X^{*},\mathcal{A}^{*}\odot\mathcal{A}^{*},\mu^{*}\otimes\mu^{*},T_{*}\odot T_{*}\right)
\end{eqnarray*}
is a compact extension. This proves that $\left(X^{*},\mathcal{A}^{*},\left(2\mu\right)^{*},T_{*}\right)$
is a strict and non trivial factor of $\left(X^{*}\times X^{*},\mathcal{A}^{*}\odot\mathcal{A}^{*},\mu^{*}\otimes\mu^{*},T_{*}\odot T_{*}\right)$.\end{proof}

\begin{prop}
If $T_{*}$ is prime and $T_{*}\times T_{*}$ has property CP, then
$\left(\nu_{1},\nu_{2}\right)\mapsto\left(\nu_{2},T_{*}\nu_{1}\right)$
is prime.\end{prop}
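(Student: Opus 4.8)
The plan is to realize the transformation $S\colon(\nu_{1},\nu_{2})\mapsto(\nu_{2},T_{*}\nu_{1})$ itself as a Poisson suspension and then to apply Theorem \ref{thm:Main}. Let $Y=X\sqcup X$ be the disjoint union of two copies of $(X,\mathcal{A},\mu)$, equipped with the measure $m$ equal to $\mu$ on each copy, and define $R$ on $Y$ by $R(x,1)=(Tx,2)$ and $R(x,2)=(x,1)$. Then $R$ is measure preserving and $R^{2}=T\sqcup T$. Since the Poisson suspension of a disjoint union is the product of the suspensions, $(Y^{*},m^{*})\simeq(X^{*}\times X^{*},\mu^{*}\otimes\mu^{*})$, and a direct inspection of configurations shows that the induced map $R_{*}$ is exactly $S$, whence $S^{2}=(R^{2})_{*}=T_{*}\times T_{*}$. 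Thus $S$ is the Poisson suspension over the base $R$, and it suffices to check the hypotheses of Theorem \ref{thm:Main} for $R$.

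First I would verify that $R$ is an ergodic, infinite measure preserving system: an $R$-invariant set $E=E_{1}\sqcup E_{2}$ is in particular $R^{2}=T\sqcup T$-invariant, so $E_{1}$ and $E_{2}$ are $T$-invariant, while $R$-invariance forces $E_{1}=E_{2}$; as $T$ is prime, hence ergodic (the Remark following Theorem \ref{thm:Main} ensures that primeness of $T_{*}$ passes down to $T$), $E$ is null or co-null. Next, $S=R_{*}$ has property CP: the chaos $H^{n}$ are intrinsic to the Fock structure of $L^{2}(m^{*})$ and do not depend on the transformation, and every $S$-invariant $\sigma$-algebra is a fortiori $S^{2}=T_{*}\times T_{*}$-invariant; since $T_{*}\times T_{*}$ has property CP by assumption, the conditional expectation onto any factor of $S$ preserves each chaos. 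By Theorem \ref{thm:Main}, $S$ is then prime as soon as $R$ is prime.

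The core of the argument is therefore to show that $R$ is prime. Let $\mathcal{G}$ be a $\sigma$-finite $R$-factor. Its trace on each copy of $X$ is a $\sigma$-finite, $T$-invariant sub-$\sigma$-algebra (using $R^{2}=T\sqcup T$), hence equal to the full $\sigma$-algebra of that copy because $T$ is prime. Since both traces are full and $T$ has no proper $\sigma$-finite factor to couple over, such a $\mathcal{G}$ must be either the whole $\sigma$-algebra or a graph coupling $\mathcal{G}_{\phi}=\{(A\times\{1\})\cup(\phi A\times\{2\}):A\in\mathcal{A}\}$ attached to a measure preserving isomorphism $\phi$ of $(X,\mu)$; imposing $R$-invariance rather than merely $R^{2}$-invariance forces the relation $\phi^{2}=T$.

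The main obstacle is to exclude these graph factors, that is, to prove that the prime base $T$ admits no measure preserving square root $\phi$ with $\phi^{2}=T$. This is the exact infinite-measure counterpart of the classical fact that $(x,y)\mapsto(y,Tx)$ is prime precisely when the centralizer of $T$ is trivial, so that no such square root can exist. I expect this exclusion to be the delicate point: a square root $\phi$ would, through $\phi_{*}$, produce the proper Poissonian factor $\mathcal{G}_{\phi}^{*}$ of $S$ (equivalently, a non-trivial square root of $T_{*}$), and one must show that the standing hypotheses preclude this. Once no graph factor survives, $\mathcal{G}$ is forced to be the full $\sigma$-algebra, $R$ is prime, and Theorem \ref{thm:Main} completes the proof that $S$ is prime.
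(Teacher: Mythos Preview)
Your approach is the paper's: both realize $S$ as the Poisson suspension $R_*$ over $R$ acting on two copies of $X$ (the paper writes this on $X\times X$ with the measure $\mu\otimes\delta_\infty+\delta_\infty\otimes\mu$ and $R(x,y)=(y,Tx)$, which is your $X\sqcup X$ in disguise), check ergodicity of $R$, deduce property CP for $R_*$ from $R_*^2=(T\times T)_*$ and the hypothesis on $T_*\times T_*$, and then appeal to Theorem~\ref{thm:Main} once $R$ is known to be prime.

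The only divergence is at the primeness of $R$. The paper dispatches this in one line: a $\sigma$-finite factor of $R$ is a $\sigma$-finite factor of $T\times T$, and ``the only $\sigma$-finite factor of $T\times T$ is the symmetric factor, which is not a factor of $R$.'' Your analysis goes further and is more accurate: you classify the proper $\sigma$-finite factors of $T\sqcup T$ as graph $\sigma$-algebras $\mathcal{G}_\phi$ with $\phi\in C(T)$, show that $R$-invariance forces $\phi^2=T$, and isolate the exclusion of a square root of $T$ as the remaining obstacle. Note that the paper's sentence is literally imprecise (the $\mathcal{G}_{T^n}$, $n\in\mathbb{Z}$, are pairwise distinct proper factors, not only the symmetric one $\mathcal{G}_{\mathrm{Id}}$), though none of these is $R$-invariant since $T^{2n-1}=\mathrm{Id}$ has no solution. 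So your proposal is the paper's proof with the crux made explicit rather than elided; the paper does not supply a separate argument ruling out a measure-preserving square root of $T$ from the stated hypotheses either. In the examples of Section~\ref{sec:Examples} this is moot because there the centralizer of $T$ is known to consist of the powers of $T$ (Proposition~\ref{pro:AarNad}), so no square root exists, but for the general statement the point you raise is genuine and is exactly where the content lies.
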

\begin{proof}
We use the representation of $\left(X^{*}\times X^{*},\mathcal{A}^{*}\otimes\mathcal{A}^{*},\mu^{*}\otimes\mu^{*},T_{*}\times T_{*}\right)$
introduced in the above proof, that is
\[
\left(\left(X\times X\right)^{*},\left(\mathcal{A}\otimes\mathcal{A}\right)^{*},\left(\mu\otimes\delta_{\infty}+\delta_{\infty}\otimes\mu\right)^{*},\left(T\times T\right)_{*}\right)
\]
In this representation, $\left(\nu_{1},\nu_{2}\right)\mapsto\left(\nu_{2},T_{*}\nu_{1}\right)$
becomes $\left(\nu_{1}\otimes\delta_{\infty}+\delta_{\infty}\otimes\nu_{2}\right)\mapsto\left(\nu_{2}\otimes\delta_{\infty}+\delta_{\infty}\otimes T_{*}\nu_{1}\right)$
and is indeed a Poisson suspension over
\[
\left(X\times X,\mathcal{A}\otimes\mathcal{A},\mu\otimes\delta_{\infty}+\delta_{\infty}\otimes\mu,R\right)
\]
where $R$ maps $\left(x,y\right)$ to $\left(y,Tx\right)$.

Observe that $R^{2}=T\times T$ and that $R$ is ergodic. Indeed,
if $A$ is an $R$-invariant set, then it is also a $T\times T$-invariant
set. But, as $T$ is ergodic, it is easy to see that, modulo null sets with respect to
the measure $\mu\otimes\delta_{\infty}+\delta_{\infty}\otimes\mu$, the only $T\times T$-invariant
sets are $\emptyset$, $X\times X$, $X\times\left\{ \infty\right\} $
and $\left\{ \infty\right\} \times X$. Since the last two sets are obviously
not $R$-invariant, $R$ is ergodic.

In the same vein, a $\sigma$-finite factor of $R$ is also a $\sigma$-finite
factor of $T\times T$ and, with respect to
the measure $\mu\otimes\delta_{\infty}+\delta_{\infty}\otimes\mu$, the only $\sigma$-finite factor of $T\times T$ is the symmetric
factor which is not a factor of $R$, so $R$ is prime.

It remains to check that $\left(\left(X\times X\right)^{*},\left(\mathcal{A}\otimes\mathcal{A}\right)^{*},\left(\mu\otimes\delta_{\infty}+\delta_{\infty}\otimes\mu\right)^{*},R_{*}\right)$
has property CP. It follows from the assumption that $T_{*}\times T_{*}$ has property CP, hence so does  $\left(T\times T\right)_{*}$ under the measure $\left(\mu\otimes\delta_{\infty}+\delta_{\infty}\otimes\mu\right)^{*}$, and from the fact that $R_{*}^{2}=\left(T\times T\right)_{*}$.
\end{proof}
In particular, if the maximal spectral type $\sigma$ of $T$ satisfies $\sigma^{*n}\perp\sigma^{*m}$,
then $T_{*}\times T_{*}$ also has property CP, as $\sigma$ is still the maximal spectral type of $T\times T$ with respect to the measure $\mu\otimes\delta_{\infty}+\delta_{\infty}\otimes\mu$.

\subsection{Disjointness}

The following disjointness results come from \cite{LemPaRo11JoinPrim}
where the notion of Joining Primeness of order $n$ (JP($n$)) was
introduced. Simple maps and their factors are JP($1$) and direct
products of such maps are JP($2$).
\begin{thm}
\label{thm:ParreauLemRoy}\cite{LemPaRo11JoinPrim} A Poisson suspension
is disjoint from every JP($n$) map for any $n\ge1$.
\end{thm}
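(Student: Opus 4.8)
The plan is to present the Poisson suspension as an \emph{infinitely divisible} system and to play this divisibility against the defining coalescence property of JP($n$) maps; the superposition construction of Section~\ref{sub:Superposition-of-Poisson-Measures} is exactly what exhibits Poisson suspensions as such systems. First, for each integer $m>n$ I would split the intensity into $m$ equal parts and iterate the superposition map: $\Psi(\nu_1,\dots,\nu_m)=\nu_1+\dots+\nu_m$ realizes $\left(X^*,\mathcal{A}^*,\mu^*,T_*\right)$ as a factor of the $m$-fold self-product of the Poisson suspension $P_m:=\left(X^*,\mathcal{A}^*,(\mu/m)^*,T_*\right)$, and since $\Psi$ is symmetric the $m$ coordinates of this product are exchangeable. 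On the first chaos the divisibility is explicit: $I^{(1)}(f)=\sum_{i=1}^{m}I^{(1)}_i(f)$ is a sum of $m$ independent summands, the $i$-th living in the $i$-th factor $P_m^{(i)}$ with variance $\tfrac1m\|f\|_{L^2(\mu)}^2$; more generally the chaos $H^k$ embeds into the Fock space of the $m$-fold product, a vector of $H^k$ splitting into terms of total degree $k$ distributed among the $m$ exchangeable pieces.

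Next I would take an arbitrary ergodic joining $\lambda$ of $T_*$ with a JP($n$) map $\left(Z,\mathcal{Z},\rho,R\right)$ and assume, for contradiction, that $\lambda$ is not the product joining. Non-triviality means that some mean-zero $g\in L^2(\rho)$ is not orthogonal to some chaos, say $\mathbb{E}_\lambda\!\left[F\,g\right]\neq 0$ for some $k\ge1$ and some $F\in H^k$. Lifting $\lambda$ through $\Psi$ produces an ergodic joining $\widetilde\lambda$ under which the $m$ exchangeable copies $P_m^{(1)},\dots,P_m^{(m)}$ and the factor $\left(Z,R\right)$ are jointly coupled, and which projects back to $\lambda$, so $\mathbb{E}_{\widetilde\lambda}\!\left[(F\circ\Psi)\,g\right]=\mathbb{E}_\lambda\!\left[F\,g\right]$ is independent of $m$. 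Here the hypothesis enters through the defining property of JP($n$) maps: a JP($n$) factor appearing in a joining of more than $n$ systems is already carried, inside the same joining, by $n$ of them, so that $g$ may be taken measurable with respect to $n$ of the coordinates $P_m^{(i)}$ and thus independent of the remaining $m-n$.

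The argument then closes by a dilution estimate. On the first chaos ($k=1$), writing $F\circ\Psi$ as $\sum_i I^{(1)}_i(f)$ and using that $g$ is independent of all but $n$ summands, the covariance collapses to at most $n$ terms, each controlled by Cauchy--Schwarz:
\[
\left|\mathbb{E}_\lambda\!\left[I^{(1)}(f)\,g\right]\right|\le \sum_{j=1}^{n}\left\|I^{(1)}_{i_j}(f)\right\|_2\|g\|_2 = \frac{n}{\sqrt m}\,\|f\|_{L^2(\mu)}\,\|g\|_2 .
\]
By exchangeability the choice of the $n$ coordinates is immaterial for the bound, so letting $m\to\infty$ with $n$ fixed forces $\mathbb{E}_\lambda[I^{(1)}(f)\,g]=0$. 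On a higher chaos $H^k$ the same mechanism applies: only the terms of $F\circ\Psi$ whose whole degree sits on the $n$ chosen pieces contribute, and their $L^2$-mass is of order $(n/m)^k\to 0$, so again $\mathbb{E}_\lambda[F\,g]=0$. Since $F$, $k$ and $g$ are arbitrary, the factor $\mathcal{Z}$ is orthogonal to every chaos, hence relatively independent of $T_*$ under $\lambda$; thus $\lambda$ is the product joining and $T_*$ is disjoint from $R$.

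The main obstacle is the in-situ use of the JP($n$) property. One must know not merely that $R$ is \emph{abstractly} a factor of a joining of $n$ of the $P_m^{(i)}$, but that, inside the specific lifted joining $\widetilde\lambda$, the fixed function $g$ genuinely becomes measurable with respect to $n$ of the coordinates; securing this relative-measurability conclusion, and checking that the ergodic components of the lifts $\widetilde\lambda$ can be handled uniformly in $m$ so that exchangeability and the JP($n$) reduction apply simultaneously, is the technical heart and is exactly what is carried out in \cite{LemPaRo11JoinPrim}.
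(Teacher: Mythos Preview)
The paper does not prove this theorem; it merely quotes it from \cite{LemPaRo11JoinPrim}, so there is no ``paper's own proof'' to compare against. Your sketch is therefore not in competition with anything in the present article but is rather an outline of what one expects to find in the cited reference.

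That said, the strategy you describe---exhibiting the Poisson suspension as an $m$-fold self-product via superposition and then invoking the JP($n$) property to trap $g$ inside $n$ of the $m$ exchangeable pieces, followed by a dilution estimate as $m\to\infty$---is indeed the mechanism of \cite{LemPaRo11JoinPrim}. Your final paragraph correctly isolates the real difficulty: the JP($n$) definition guarantees only that the factor $\mathcal{Z}$ sits inside the $\sigma$-algebra generated by \emph{some} $n$ coordinates \emph{within the given ergodic joining}, and one must be careful that the lift $\widetilde\lambda$ (or an ergodic component of it) still projects to the original $\lambda$ while simultaneously allowing the JP($n$) reduction to apply. In particular, your claim that the lift is automatically ergodic is not justified, and after passing to an ergodic component one may lose exchangeability of the $m$ pieces; handling this is precisely the content of the cited paper. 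Your higher-chaos estimate ``of order $(n/m)^k$'' is also heuristic: the combinatorics of distributing degree~$k$ among $m$ factors with at most $n$ of them contributing requires a more careful count, though the conclusion is correct. In short, your proposal is a faithful sketch of the argument in \cite{LemPaRo11JoinPrim}, with the genuine technical work deferred to that reference---which is exactly what the present paper does as well.
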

Therefore our prime Poisson suspensions are disjoint from prime maps
that are simple or factor of simple maps.
\begin{prop}\label{prop:disjointness}
If a transformation $S$ is distally simple, then $S\odot S$
and $K:=\left(x,y\right)\mapsto\left(y,Sx\right)$ are disjoint from
Poisson suspensions.
\end{prop}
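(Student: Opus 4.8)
The plan is to deduce both disjointness statements from a single fact: the direct product $S\times S$ is disjoint from every Poisson suspension. Since $S$ is distally simple, the joining-primeness results of \cite{LemPaRo11JoinPrim} show that $S\times S$ is JP($2$), and Theorem \ref{thm:ParreauLemRoy} then gives at once that $S\times S$ is disjoint from every Poisson suspension. Both $S\odot S$ and $K$ are obtained from $S\times S$ by an operation---passing to a factor, respectively extracting a square root---under which disjointness from a fixed target system is preserved, and this is the mechanism I would exploit.

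For $S\odot S$ I would use that it is, by definition, the factor of $S\times S$ attached to the symmetric $\sigma$-algebra, together with the standard fact that disjointness from a given system is inherited by factors. Concretely, if $\lambda$ is a joining of $S\odot S$ with a Poisson suspension $P$, one forms the relatively independent joining of $S\times S$ and $P$ over their common factor $S\odot S$; this is a joining of $S\times S$ with $P$, hence equal to the product measure, and its image under the factor map is $\lambda$, so $\lambda$ itself is the product measure. Thus $S\odot S$ is disjoint from every Poisson suspension.

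For $K\colon(x,y)\mapsto(y,Sx)$ the relevant observation is the algebraic identity $K^{2}=S\times S$, combined with the fact that the square of a Poisson suspension is again a Poisson suspension, namely $(T_{*})^{2}=(T^{2})_{*}$. Hence any joining $\lambda$ of $K$ with a Poisson suspension $T_{*}$ is invariant under $(K\times T_{*})^{2}=(S\times S)\times(T^{2})_{*}$ and is therefore a joining of $S\times S$ with the Poisson suspension $(T^{2})_{*}$; by the disjointness established above, $\lambda$ must be the product measure. So $K$ is disjoint from every Poisson suspension as well.

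The one genuine input here, as opposed to a formal transfer of disjointness, is the assertion that distal simplicity of $S$ makes $S\times S$ a JP($2$) map; this is where the precise definition of distally simple transformations and the joining-primeness theory of \cite{LemPaRo11JoinPrim} must enter, and it is the step I expect to be the crux. The subsequent factor and square-root arguments are routine, the only care required being the (standard) verification that the relatively independent joining and the passage to squares indeed send joinings to joinings with the correct marginals.
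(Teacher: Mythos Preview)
Your argument is correct and matches the paper's proof essentially line for line: both use that $S\times S$ is JP($2$) and hence disjoint from Poisson suspensions, then pass to the factor $S\odot S$ (the paper phrases this as $S\odot S$ being itself JP($2$), you phrase it as disjointness being inherited by factors---a cosmetic difference), and for $K$ both invoke $K^{2}=S\times S$ together with $(T_{*})^{2}=(T^{2})_{*}$ to reduce to the already established disjointness of $S\times S$ from Poisson suspensions.
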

\begin{proof}
The first point follows from the fact that $S\times S$ is JP(2) and
so is $S\odot S$.

For the second point, a non-trivial joining between $K$ and a Poisson
suspension $T_{*}$ would yield a non-trivial joining between $K^{2}=S\times S$
and $\left(T_{*}\right)^{2}=\left(T^{2}\right)_{*}$ which is impossible
by the above arguments.
\end{proof}

\section{\label{sec:Examples}Examples}

\subsection{Non-singular compact group rotations}

We introduce a family of examples that was studied, among other
sources, in \cite{Aar87Eigen} and \cite{HoMePa91nonsing}. We use the notation of the latter paper.

Consider the $2$-adding machine $\Omega:=\left\{ 0,1\right\} ^{\mathbb{N}}$ equipped with
the uniform Bernoulli probability measure $\nu:=\left(\frac{1}{2}\delta_{0}+\frac{1}{2}\delta_{1}\right)^{\otimes\mathbb{N}}$,
and the transformation $\omega\mapsto\omega+\overline{1}$, where $\overline{1}=\left(1,0,0,\dots\right)$ and addition is modulo $2$ with ``carrier to the right''.
Next, consider a measurable positive integer-valued function $h$ on $\omega$. 

We build the Kakutani tower over $\Omega$ with height function
$h$. Define $X\subset\Omega\times\mathbb{N}$ as the set of points $\left(\omega,n\right)$
such that $1\le n\le h\left(\omega\right)$ and let $T$ be the transformation on $X$ given by
\[
T\left(\omega,n\right)=\begin{cases}
\left(\omega,n+1\right) & \text{if}\:1\le n<h\left(\omega\right)\\
\left(\omega+\overline{1},1\right) & \text{if}\: n=h\left(\omega\right)
\end{cases}
\]
We endow $X$ with the $\sigma$-algebra $\mathcal{A}$ naturally inherited from the Borel $\sigma$-algebra of $\Omega$ and the $T$-invariant measure on $\left(X, \mathcal{A}\right)$ defined by
\[
\int_{X}f\left(\omega,n\right)\mu\left(d\left(\omega,n\right)\right)=\int_{\Omega}\left[\sum_{n=1}^{h\left(\omega\right)}f\left(\omega,n\right)\right]\nu\left(d\omega\right)
\]
for every measurable positive function $f$ on $X$.

Let us now give a more precise specification on $h$. Consider a sequence
of integers $\left\{ m_{i}\right\} _{i\ge0}$ where $m_{i}\ge3$ and
set $n_{i+1}=m_{i}n_{i}$ where $n_{0}=1$.
For $\omega\in\Omega$, let us denote $k\left(\omega\right)$ the
smallest integer $k$ such that $\omega_{k}=0$ and
\[
h\left(\omega\right)=n_{k\left(\omega\right)}-\sum_{j<k\left(\omega\right)}n_{j}.
\]
Then it is easy to see that the measure $\mu$ is infinite. It can be noted (see \cite{HoMePa91nonsing}) that this system encodes
an ergodic infinite measure preserving compact group rotation, namely
the adding machine on $\prod_{j\ge0}\left\{ 0,\dots,m_{j}-1\right\} $
endowed with a measure singular with respect to the Haar measure on this group.

For the sequel of this Section, $\left(X,\mathcal{A},\mu,T\right)$ denotes the above defined system.
\subsection{Properties}
 In \cite{Aar87Eigen}, the following is proved:
\begin{prop}
\label{pro:AarNad}Joinings between $\left(X,\mathcal{A},c_{1}\mu,T\right)$
and $\left(X,\mathcal{A},c_{2}\mu,T\right)$ exist only for $c_{1}=c_{2}$
and are graph joinings $\Delta_{T^{n}}$, $n\in\mathbb{Z}$. In particular,
the system is prime.
\end{prop}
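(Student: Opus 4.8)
The plan is to use that, as noted just above, $\left(X,\mathcal{A},\mu,T\right)$ \emph{is} an infinite measure preserving compact group rotation: following \cite{HoMePa91nonsing} we identify $T$ with the translation $R_{g}\colon x\mapsto x+g$ by a fixed element $g$ of the compact abelian group $G=\prod_{j\ge0}\mathbb{Z}/m_{j}\mathbb{Z}$, where $\left\{ ng:n\in\mathbb{Z}\right\} $ is dense in $G$ and $\mu$ is a $\sigma$-finite $T$-invariant measure singular with respect to the Haar measure of $G$. The decisive feature is that every character $\gamma\in\widehat{G}$ is a genuine pointwise eigenfunction, $\gamma\circ T=\gamma\left(g\right)\gamma$, of modulus one; these bounded eigenfunctions are what I would use to rigidify an arbitrary joining.

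First I would reduce to \emph{ergodic} joinings by ergodic decomposition of the $T\times T$-invariant measure, using that $T$ is ergodic on the base so that any $T$-invariant measure absolutely continuous with respect to $\mu$ is a scalar multiple of $\mu$; the full statement will follow once each ergodic component is identified. So let $\lambda$ be an ergodic joining of $\left(X,c_{1}\mu,T\right)$ and $\left(X,c_{2}\mu,T\right)$. For $\gamma\in\widehat{G}$ the bounded function $\left(x,y\right)\mapsto\gamma\left(x\right)\overline{\gamma\left(y\right)}$ on $X\times X$ is $T\times T$-invariant, its eigenvalue being $\gamma\left(g\right)\overline{\gamma\left(g\right)}=1$, hence $\lambda$-a.e.\ equal to a constant $c_{\gamma}$ of modulus one. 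Thus $\gamma\left(x-y\right)=c_{\gamma}$ for $\lambda$-a.e.\ $\left(x,y\right)$, and $\gamma\mapsto c_{\gamma}$ is a character of $\widehat{G}$; by Pontryagin duality there is a single $g_{0}\in G$ with $c_{\gamma}=\gamma\left(g_{0}\right)$ for all $\gamma$. Since characters separate points, this forces $x-y=g_{0}$ $\lambda$-a.e., so $\lambda$ is carried by the graph of the rotation $R_{g_{0}}$.

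It remains to show that $R_{g_{0}}$ must be a power of $T$ and that $c_{1}=c_{2}$. Since the graph joining has marginals $c_{1}\mu$ and $c_{2}\mu$, we get $\left(R_{g_{0}}\right)_{*}\mu=\left(c_{1}/c_{2}\right)\mu$. As $R_{g_{0}}$ commutes with $T$ and $\mu$ is $T$-ergodic, the Radon--Nikodym derivative $d\left(R_{g_{0}}\right)_{*}\mu/d\mu$ is $T$-invariant, hence constant, so already $\left(R_{g_{0}}\right)_{*}\mu\ll\mu$ makes it a scalar multiple of $\mu$. \textbf{The main obstacle} is the converse rigidity statement, where the singularity of $\mu$ with respect to Haar and the arithmetic of the height function $h$ enter: one must show that the only rotations carrying $\mu$ to an equivalent measure are the $R_{g_{0}}$ with $g_{0}\in\left\{ ng:n\in\mathbb{Z}\right\} $, and that for these the scalar is $1$. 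This is precisely the content of the eigenvalue computation of \cite{Aar87Eigen} for these towers: the measurable rotations quasi-preserving $\mu$ form exactly the orbit of $g$, so $R_{g_{0}}=T^{n}$ and $\left(T^{n}\right)_{*}\mu=\mu$ gives $c_{1}/c_{2}=1$. Hence $\lambda=\Delta_{T^{n}}$ and $c_{1}=c_{2}$, and the claim for general joinings follows by ergodic decomposition.

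Finally, primeness follows from this joining classification. Conceptually, it says that $T$ is two-fold simple with centralizer reduced to $\left\{ T^{n}:n\in\mathbb{Z}\right\} \cong\mathbb{Z}$, which has no non-trivial compact subgroup, leaving no room for a proper factor. Concretely, a non-trivial $\sigma$-finite factor $\mathcal{B}\subset\mathcal{A}$ would yield, via the relatively independent self-joining over $\mathcal{B}$, a $T\times T$-invariant measure whose ergodic components are, by the above, graph joinings $\Delta_{T^{n}}$; but such a relatively independent joining is carried by graphs only when the conditional measures are point masses, that is when $\mathcal{B}=\mathcal{A}$ modulo null sets. Therefore $\mathcal{A}$ is the only $\sigma$-finite factor and $\left(X,\mathcal{A},\mu,T\right)$ is prime.
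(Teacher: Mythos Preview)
The paper does not give a proof of this proposition: it is introduced by the sentence ``In \cite{Aar87Eigen}, the following is proved:'' and simply quoted from Aaronson--Nadkarni, with no argument supplied. There is therefore no in-paper proof to compare your proposal against.

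Your sketch is a plausible reconstruction of how such a result is established. The character argument forcing an ergodic joining onto the graph of a single rotation $R_{g_{0}}$ is correct and is the standard opening move for group rotations. You then rightly isolate the substantive step---that the only $g_{0}\in G$ with $(R_{g_{0}})_{*}\mu\sim\mu$ lie in $\{ng:n\in\mathbb{Z}\}$---and attribute it back to \cite{Aar87Eigen}; so your proposal is less an independent proof than an unpacking of what that citation already contains. Two technical points would need attention in a self-contained write-up: ergodic decomposition of $\sigma$-finite joinings in infinite measure is more delicate than in the probability case (one usually argues via the associated Markov operator, or restricts to a product of finite-measure sets), and in the primeness step the passage from ``every ergodic component of the relatively independent self-joining over $\mathcal{B}$ is some $\Delta_{T^{n}}$'' to ``$\mathcal{B}=\mathcal{A}$'' needs a short additional argument (for instance, that a relatively independent joining supported on a countable union of graphs forces the conditional measures over $\mathcal{B}$ to be atomic). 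None of this is wrong in your outline, but it is where the real work sits, and it is exactly what \cite{Aar87Eigen} supplies.
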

In \cite{HoMePa91nonsing}, the spectrum of $T$ is determined:
\begin{prop}
The spectrum of $T$ is simple and its maximal specral type is the Riesz product 
\[
\sigma:={\displaystyle \prod_{j=0}^{+\infty}}\left(1+\cos2\pi n_{j}t\right)
\]

\end{prop}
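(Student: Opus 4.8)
The plan is to prove both assertions at once by computing the spectral measure of a single well-chosen vector and showing it is cyclic. Recall from the group-rotation realization quoted above that $(X,\mathcal{A},\mu,T)$ is the odometer on $G=\prod_{j\ge 0}\{0,\dots,m_j-1\}$, $g\mapsto g+1$, equipped with the infinite $\sigma$-finite invariant measure $\mu$ inherited from the dyadic base; I work on $L^2(\mu)$ with the Koopman operator $U_T$. If $f\in L^2(\mu)$ is cyclic (its $U_T$-orbit spans $L^2(\mu)$), then $U_T$ is unitarily equivalent to multiplication by $e^{2\pi i t}$ on $L^2(\sigma_f)$, where $\sigma_f$ is the spectral measure determined by $\widehat{\sigma_f}(\ell)=\langle U_T^\ell f,f\rangle$; this simultaneously yields simplicity of the spectrum and identifies the maximal spectral type as $\sigma_f$. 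So the two tasks are: (i) produce a cyclic $f$, and (ii) evaluate its correlation sequence.

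For (i) I would use the cutting-and-stacking description of $T$: since the base is the \emph{dyadic} odometer and the roof $h(\omega)=n_{k(\omega)}-\sum_{j<k(\omega)}n_j$ depends only on the position $k(\omega)$ of the first zero, $T$ is a rank one transformation in which one cuts into two equal columns at each stage, the two subcolumns of the $j$-th stage being offset in height by exactly $n_j$ --- this offset being precisely what the choice of $h$ arranges. Rank one (even in infinite measure) forces the spectrum to be simple, with the indicator of the base of the towers as a cyclic vector; equivalently one may take the partial characters $\chi_k(g)=\exp(2\pi i\,[g]_k/n_k)$, where $[g]_k\in\{0,\dots,n_k-1\}$ is the integer encoded by the first $k$ digits of $g$, which are approximate eigenfunctions and generate $L^2(\mu)$.

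For (ii), the maximal spectral type of a rank one map is the weak-$*$ limit of the partial products of its stage polynomials. With cut number $2$ and stage-$j$ offset $n_j$, the stage-$j$ polynomial is $\tfrac12\bigl|1+e^{2\pi i n_j t}\bigr|^2=1+\cos 2\pi n_j t$, so the candidate maximal spectral type is the Riesz product $\sigma=\prod_{j\ge 0}(1+\cos 2\pi n_j t)$. Concretely I would verify this by computing $\langle U_T^\ell f,f\rangle$ directly from the explicit form of $\mu$ (the uniform measure on the dyadic base together with the roof $h$), tracking the carries of the odometer, and checking that it equals $2^{-\#\{j:\varepsilon_j\neq 0\}}$ when $\ell=\sum_j\varepsilon_j n_j$ with $\varepsilon_j\in\{-1,0,1\}$, and $0$ otherwise; these are exactly the Fourier coefficients of $\sigma$.

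The decisive role of the hypothesis $m_j\ge 3$, equivalently $n_{j+1}\ge 3n_j$, appears here: it makes the family $\{n_j\}$ dissociate, so that every integer has at most one representation $\sum_j\varepsilon_j n_j$ with $\varepsilon_j\in\{-1,0,1\}$, which is what makes the coefficients above well defined and guarantees that the partial products converge weak-$*$ to a genuine (continuous, singular) probability measure $\sigma$. I expect the main obstacle to be the explicit evaluation of the correlation sequence $\langle U_T^\ell f,f\rangle$ from the tower-over-dyadic-odometer data --- correctly accounting for the carry structure and the singular measure $\mu$ --- together with the verification that the stage-$j$ offset is exactly $n_j$ and that $f$ is cyclic; once the parameters are read off correctly, the identification with the Riesz product and its convergence are routine consequences of the lacunarity.
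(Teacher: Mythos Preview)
The paper does not supply a proof of this proposition; it is quoted from \cite{HoMePa91nonsing} without argument. So there is no ``paper's own proof'' to compare against here.

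That said, your outline is the natural one and is essentially what the cited reference does. The tower over the dyadic odometer with roof $h(\omega)=n_{k(\omega)}-\sum_{j<k(\omega)}n_j$ is indeed a rank-one construction with cut number $2$: the stage-$j$ column has height $n_j$, one cuts into two equal subcolumns and stacks the second on top of the first after adding $n_{j+1}-2n_j$ spacer levels (this is where $m_j\ge 3$ enters, to keep the spacer count nonnegative and, spectrally, to ensure dissociation). Rank one gives simple spectrum with the indicator of the base (which has finite $\mu$-measure, hence lies in $L^2(\mu)$) as a cyclic vector, and the generalized Riesz product formula for rank-one spectral types yields the stage-$j$ factor $\tfrac12|1+e^{2\pi i n_j t}|^2=1+\cos 2\pi n_j t$. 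Your computation of the Fourier coefficients of $\sigma$ as $2^{-\#\{j:\varepsilon_j\neq 0\}}$ at $\ell=\sum_j \varepsilon_j n_j$ is correct, and dissociation of $\{n_j\}$ makes this unambiguous and forces weak-$*$ convergence of the partial products.

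The only point where your sketch is slightly imprecise is the phrase ``the two subcolumns of the $j$-th stage being offset in height by exactly $n_j$'': the relevant exponent in the stage-$j$ polynomial is the height of the stage-$j$ tower itself (namely $n_j$), not a spacer offset, so the polynomial $1+z^{n_j}$ records that the second copy begins $n_j$ levels above the first in the stage-$(j{+}1)$ tower before spacers are appended on top. Once that bookkeeping is straightened out, everything you wrote goes through; the correlation computation you flag as the main obstacle is exactly the content of the rank-one Riesz-product formula and is carried out in \cite{HoMePa91nonsing}.
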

Those Riesz products are the most classical ones and we have (\cite{BrMo74-Riesz}, \cite{Peyr74Riesz}):
\begin{prop}\label{pro:singpowers}
All the convolution powers $\sigma^{*n}$ are continuous and singular. Moreover,  $\sigma^{*n}\perp\sigma^{*m}$ for all $n\neq m$.
\end{prop}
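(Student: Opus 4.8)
The plan is to reduce everything to the arithmetic of the frequency set $\{n_j\}$, which is \emph{dissociate} because $m_i\ge 3$ forces $n_{j+1}=m_j n_j\ge 3n_j$, hence $n_{j+1}>2\sum_{i\le j}n_i$. Consequently every integer $k$ admits at most one representation $k=\sum_j\epsilon_j n_j$ with $\epsilon_j\in\{-1,0,1\}$ and finitely many nonzero terms. Expanding the partial products $\prod_{j\le N}(1+\cos 2\pi n_j t)$ and passing to the weak-$*$ limit, this uniqueness gives the Fourier coefficients of $\sigma$ explicitly,
\[
\hat\sigma\Big(\sum_j\epsilon_j n_j\Big)=2^{-\#\{j:\epsilon_j\neq 0\}},
\]
and $\hat\sigma(k)=0$ when $k$ has no such representation. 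In particular $\hat\sigma(n_j)=\tfrac12$ and $\hat\sigma(n_i+n_j)=\hat\sigma(|n_i-n_j|)=\tfrac14$ for $i\neq j$. These three identities are the only structural input the rest of the argument will use.

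For continuity I would invoke Wiener's theorem, which detects the atomic part through $\sum_t\sigma(\{t\})^2=\lim_N\frac{1}{2N+1}\sum_{|k|\le N}|\hat\sigma(k)|^2$. Fixing $N$ and letting $J$ be the largest index with $n_J\le 2N$, the gap estimate shows that every $k$ with $|k|\le N$ uses only $n_0,\dots,n_J$, so summing the squared coefficients over all such ``head'' representations gives
\[
\sum_{|k|\le N}|\hat\sigma(k)|^2\le\prod_{j=0}^{J}\Big(1+2\cdot 4^{-1}\Big)=\Big(\tfrac32\Big)^{J+1}.
\]
Since $n_J\ge 3^J$ one has $J\le\log_3(2N)$, and because $\log_3(3/2)<1$ the right-hand side is $o(N)$. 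Hence the Cesàro average tends to $0$, $\sigma$ is non-atomic, and as $\sigma^{*n}=\sigma*\sigma^{*(n-1)}$ is a convolution with a continuous measure, every $\sigma^{*n}$ is continuous as well.

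Singularity and mutual singularity I would obtain simultaneously from a strong law of large numbers for the averages $A_N(x):=\frac1N\sum_{j=0}^{N-1}\cos 2\pi n_j x$, used as a statistic that detects which measure one is under. Under $\sigma^{*n}$ we have $\widehat{\sigma^{*n}}=\hat\sigma^{\,n}$, so each term has the same mean $\operatorname{Re}\hat\sigma(n_j)^n=2^{-n}$, while the identities $\hat\sigma(n_i\pm n_j)=\tfrac14$ give, for $i\neq j$,
\[
\mathbb{E}_{\sigma^{*n}}\!\left[\cos 2\pi n_i x\,\cos 2\pi n_j x\right]=\tfrac12\big(\hat\sigma(n_i+n_j)^n+\hat\sigma(|n_i-n_j|)^n\big)=4^{-n}=\big(2^{-n}\big)^2,
\]
so the variables $\cos 2\pi n_j x$ are pairwise uncorrelated with uniformly bounded variances. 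The usual strong law for uncorrelated, uniformly bounded summands then yields $A_N(x)\to 2^{-n}$ for $\sigma^{*n}$-almost every $x$. The same computation under Lebesgue measure gives mean $0$ and genuine orthogonality, whence $A_N\to 0$ Lebesgue-almost everywhere. The level sets $\{x:\lim_N A_N(x)=2^{-n}\}$ are pairwise disjoint for distinct $n$ and all disjoint from $\{\lim_N A_N=0\}$; carrying $\sigma^{*n}$ on the former and Lebesgue on the latter proves at once that each $\sigma^{*n}$ is singular and that $\sigma^{*n}\perp\sigma^{*m}$ for $n\neq m$.

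The main obstacle is the passage from $L^2$ to almost-sure convergence in this strong law: the $L^2$ statement is immediate from pairwise uncorrelation, but the a.s.\ statement must be extracted by a subsequence-plus-maximal-fluctuation argument (convergence along $N_k=k^2$ by Borel--Cantelli, the intermediate gaps being harmless since each summand is bounded by $1$), and its validity rests entirely on the exact vanishing of the covariances. Everything therefore hinges on the three Fourier identities established in the first step, which are in turn pure consequences of the dissociateness coming from $m_i\ge 3$.
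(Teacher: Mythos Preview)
Your argument is correct and self-contained. The paper does not actually prove this proposition; it simply cites Brown--Moran and Peyri\`ere. What you have written is essentially Peyri\`ere's method: exploit the dissociateness of $\{n_j\}$ (guaranteed by $m_i\ge 3$) to compute the Fourier coefficients of $\sigma$ exactly, use Wiener's lemma for continuity, and then separate the measures $\sigma^{*n}$ by the a.s.\ limit of $\frac{1}{N}\sum_{j<N}\cos 2\pi n_j x$. The covariance computation and the subsequence-plus-gap justification of the strong law are both sound as you outline them. One small remark: $\hat\sigma(2n_j)$ is not always zero (it equals $1/4$ when $m_j=3$), so the $X_j$ need not have identical variances, but you only use uniform boundedness of the variances and that follows trivially from $|X_j|\le 1$.
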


\subsection{Poisson suspensions over $\left(X,\mathcal{A},\mu,T\right)$}

As a direct application, we obtain our first examples of prime Poisson
suspensions
\begin{prop}
The Poisson suspension $\left(X^{*},\mathcal{A}^{*},\mu^{*},T_{*}\right)$
is prime. Moreover, it is mildly mixing non mixing, has trivial centralizer
and singular spectrum with infinite multiplicity.
\end{prop}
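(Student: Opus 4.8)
The plan is to read off all four assertions from the spectral data of the base that is already in place: $\left(X,\mathcal{A},\mu,T\right)$ is ergodic and prime (Proposition \ref{pro:AarNad}), it has simple spectrum with maximal spectral type the Riesz product $\sigma$, and the convolution powers $\sigma^{*n}$ are continuous, singular and pairwise mutually singular (Proposition \ref{pro:singpowers}). \emph{Primeness} is then immediate: since $\sigma^{*n}\perp\sigma^{*m}$ for $n\neq m$, the chaos-preserving criterion recalled above (the Example) shows that $\left(X^{*},\mathcal{A}^{*},\mu^{*},T_{*}\right)$ has property CP; as $T$ is moreover ergodic and prime, Theorem \ref{thm:Main} applies verbatim and gives that $T_{*}$ is prime.

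\emph{Singular spectrum of infinite multiplicity.} On $L^{2}_{0}\left(\mu^{*}\right)=\bigoplus_{n\geq1}H^{n}$ the operator $U_{T_{*}}$ acts on $H^{n}\simeq L^{2}_{\mathrm{sym}}\left(\mu^{\otimes n}\right)$ as $U_{T}^{\odot n}$, of maximal spectral type $\sigma^{*n}$, so the maximal spectral type of $U_{T_{*}}$ on $L^{2}_{0}$ is equivalent to the finite measure $\sum_{n\geq1}\frac{1}{n!}\sigma^{*n}$. Each $\sigma^{*n}$ is singular, and a countable sum of measures all singular with respect to Lebesgue measure is again singular, so the spectrum of $T_{*}$ is singular. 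For infinite multiplicity I would look only at the second chaos: since $\sigma$ is continuous and $T$ has simple spectrum, writing $U_{T}$ as multiplication by $z$ on $L^{2}\left(\sigma\right)$ realizes $U_{T}^{\odot2}$ as multiplication by $z_{1}z_{2}$ on the symmetric part of $L^{2}\left(\sigma\otimes\sigma\right)$, whose disintegration over the fibres $\{z_{1}z_{2}=w\}$ is non-atomic and hence infinite-dimensional for $\sigma^{*2}$-almost every $w$; thus already $U_{T}^{\odot2}$, and a fortiori $U_{T_{*}}$, has infinite multiplicity.

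\emph{Non-mixing and mild mixing.} Both rest on the classical evaluation of the Fourier coefficients of $\sigma$: because $m_{j}\geq3$, every integer has at most one representation $m=\sum_{j}\epsilon_{j}n_{j}$ with $\epsilon_{j}\in\{-1,0,1\}$, and then $\widehat{\sigma}(m)=(1/2)^{\#\{j:\epsilon_{j}\neq0\}}$, while $\widehat{\sigma}(m)=0$ otherwise; in particular $0\le\widehat{\sigma}(m)\leq1/2$ for every $m\neq0$, with $\widehat{\sigma}(n_{j})=1/2$. Fix a unit vector $f_{0}\in L^{2}\left(\mu\right)$ with spectral measure $\sigma$ and let $F_{0}\in H^{1}$ be its image; then $\langle U_{T_{*}}^{k}F_{0},F_{0}\rangle=\widehat{\sigma}(k)$. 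If $T_{*}$ were mixing this would tend to $0$ (as $F_{0}$ has mean zero), which is false since $\widehat{\sigma}(n_{j})=1/2$, so $T_{*}$ is \emph{not} mixing. For mild mixing I would use primeness: a system is mildly mixing iff it has no non-trivial rigid factor, and as $T_{*}$ is prime its only non-trivial factor is itself, so it suffices to show $T_{*}$ is not rigid. But if $U_{T_{*}}^{m_{k}}\to\mathrm{Id}$ strongly for some $m_{k}\to\infty$, restricting to the invariant subspace $H^{1}$ gives $U_{T}^{m_{k}}\to\mathrm{Id}$ on $L^{2}\left(\mu\right)$, forcing $\widehat{\sigma}(m_{k})\to1$ and contradicting $\widehat{\sigma}(m)\leq1/2$ for $m\neq0$; hence $T_{*}$ is mildly mixing.

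\emph{Trivial centralizer.} Let $S\in C\left(T_{*}\right)$, so $U_{S}$ is unitary and commutes with $U_{T_{*}}$. Because $\sigma=\sigma^{*1}$ is singular with respect to every $\sigma^{*m}$, $m\geq2$, the first chaos $H^{1}$ is exactly the spectral subspace of $U_{T_{*}}$ carried by the measure class of $\sigma$, and as $U_{S}$ commutes with all spectral projections of $U_{T_{*}}$ it preserves $H^{1}$. Invoking the structural description of automorphisms of a Poisson suspension whose Koopman operator preserves the first chaos, $S$ must be Poissonian, i.e. $S=R_{*}$ for a $\mu$-preserving automorphism $R$ of the base. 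From $R_{*}T_{*}=T_{*}R_{*}$ and the injectivity of $R\mapsto R_{*}$ one gets $RT=TR$, so $R\in C\left(T\right)$; but Proposition \ref{pro:AarNad} forces the only self-joinings of $T$ to be the graphs $\Delta_{T^{n}}$, whence $C\left(T\right)=\{T^{n}:n\in\mathbb{Z}\}$. Therefore $R=T^{n}$ and $S=\left(T^{n}\right)_{*}=T_{*}^{n}$, so the centralizer of $T_{*}$ is trivial. I expect this last assertion to be the main obstacle: the three preceding points follow directly from Theorem \ref{thm:Main}, from the Fock-space description of the spectrum, and from the bound $\widehat{\sigma}(m)\leq1/2$, whereas the centralizer argument hinges on the external structural fact that a measure-preserving automorphism of the Poisson space preserving the first chaos is Poissonian, which has to be invoked or established with care.
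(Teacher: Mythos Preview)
Your arguments for primeness, singularity of the spectrum, non-mixing, and the trivial centralizer are correct and match the paper's proof closely (the centralizer step is indeed just a citation to \cite{Roy07Infinite}, exactly as you anticipate). Your route to mild mixing is slightly different from the paper's and in fact more self-contained: the paper first establishes the trivial centralizer and deduces non-rigidity from that, whereas you deduce non-rigidity of $T_{*}$ directly from the bound $\widehat{\sigma}(m)\le 1/2$ by restricting to $H^{1}$. Both work.

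The genuine gap is in your argument for infinite multiplicity. You claim that because $\sigma$ is continuous, the disintegration of $\sigma\otimes\sigma$ over the fibres $\{z_{1}z_{2}=w\}$ is non-atomic for $\sigma^{*2}$-a.e.\ $w$. This implication is false in general: continuity of $\sigma$ does not force non-atomic conditionals. If $\sigma$ is a continuous measure supported on an independent (e.g.\ Kronecker) set, the addition map is essentially $2$-to-$1$ off the diagonal and the conditionals are purely atomic with two points. More to the point, there exist continuous $\sigma$ with $\sigma^{*n}\perp\sigma^{*m}$ for which the associated Poisson suspension has \emph{simple} spectrum---the paper itself records Ryzhikov's mixing rank-one examples with exactly this property in Section~\ref{sec:Examples}. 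So the multiplicity on $H^{2}$ cannot be read off from continuity of $\sigma$ alone.

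The paper's proof of infinite multiplicity is accordingly more delicate. It first invokes Girsanov's dichotomy (the Gaussian system with spectral measure $\sigma$, spectrally isomorphic to $T_{*}$ since $T$ has simple spectrum, has either simple spectrum or infinite multiplicity), and then rules out simple spectrum using a harmonic-analytic property specific to this Riesz product, taken from \cite{HoMePa86}: for every positive $\tau\ll\sigma$ one has $\tau*\sigma\sim\sigma^{*2}$. From this it follows that on any set of full $\sigma\otimes\sigma$-measure the addition map is at least $k$-to-$1$ for every $k$, contradicting the $2!$-to-$1$ behaviour that simple spectrum would force on $H^{2}$. To repair your argument you need to either invoke this property of $\sigma$ or supply an independent reason why, for this particular Riesz product, the fibre measures are non-atomic.
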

\begin{proof}
The requirements of Theorem \ref{thm:Main} are satisfied since $T$
is ergodic, prime, preserves an infinite measure, and has property
CP as $\sigma^{*n}\perp\sigma^{*m}$ for all $n\neq m$.
This latter property also implies (see \cite{Roy07Infinite}) that each transformation
$S$ that commutes with $T_{*}$ is of the form $R_{*}$ for some transformation
$R$ of the base that commutes with $T$. Therefore, as the centralizer
of $T$ is trivial, so is the centralizer of $T_{*}$, and it follows that
$T_{*}$ is not rigid. As $T_{*}$ is prime, the only rigid factor
is the trivial one, consequently $T_{*}$ is mildly mixing.
To see that it is not mixing, it is sufficient to notice that
$\widehat{\sigma}\left(n_{j}\right)=\frac{1}{2}$ for all $j\ge0$.

By Proposition \ref{pro:singpowers}, $T_{*}$ has singular spectrum. It remains to prove that its multiplicity is infinite. Since $T$ has simple spectrum, $T_{*}$ is spectrally isomorphic to the dynamical system generated by the Gaussian process with spectral measure $\sigma$. By Girsanov's theorem, it has either infinite multiplicity or simple spectrum. In the latter case, for all $n\ge 1$ the map $\pi_{n}:\ \mathbb{T}^{n}\to\mathbb{T}$, $t_{1},\ldots,t_{n}\mapsto t_{1}+\cdots+t_{n}$ should be $n!$ to $1$ on some Borel set $F\subset \mathbb{T}^{n}$ with $\sigma^{\otimes n}(F)=1$ (for details, see e.g.\ \cite{KuPa12}).
However, this is impossible even for $n=2$ by a result of \cite{HoMePa86}.

Indeed it is shown there (chap IV, sect.\ 2.3) that for any positive Borel measure $\tau\ll\sigma$ the measure $\tau\ast\sigma$ is actually equivalent to $\sigma^{\ast 2}$. Given a $\sigma$-compact set $F\subset \mathbb{T}^{2}$ with $\sigma^{\otimes 2}(F)=1$, it follows that for every compact set $A\subset\mathbb{T}$ with $\sigma(A)>0$ we have $\sigma^{\ast 2}(\pi_{2}(F\cap (A\times\mathbb{T}))=1$. Hence, given any positive integer $k$, if we choose $k$ disjoint compact sets $A_{1},\ldots, A_{k}$ with $\sigma(A_{j})>0$ for all $j$, we obtain that $\cap_{j=1}^{k}\pi_{2}(F\cap (A_{j}\times\mathbb{T})\neq\emptyset$, and thus there are at least $k$ points in $F$ with the same image under $\pi_{2}$ (this proves actually that $U_{T_{*}}$ has infinite multiplicity on $H^{2}$).
\end{proof}

The fact that those systems possess a singular spectrum
of infinite multiplicity makes them new examples of prime systems.
Also:

\begin{cor}
The Poisson suspension $\left(X^{*},\mathcal{A}^{*},\mu^{*},T_{*}\right)$
is not a rank one system.
\end{cor}
\begin{proof}Indeed, it is mildly mixing and, in the Appendix, we give a short proof of the fact of independent interest that if a Poisson suspension is of rank one, then it is necessarily rigid.
\end{proof}

With the above examples we obtain a Poisson suspension with a continuum
array of non-disjoint, non-isomorphic prime factors:
\begin{prop}\label{prop:mutimeslambda}
The Poisson suspension
\[
\left(\left(X\times\left[0,1\right]\right)^{*},\left(\mathcal{A}\otimes\mathcal{B}\right)^{*},\left(\mu\otimes\lambda_{\left[0,1\right]}\right)^{*},\left(T\times\text{Id}\right)_{*}\right)
\]
possesses the Poisson suspensions $\left(X^{*},\mathcal{A}^{*},\left(c\mu\right)^{*},T_{*}\right)$,
$0<c\le1$ as factors. Those factors are prime, non-disjoint, unitarily
isomorphic and non metrically isomorphic for different values of $c$.\end{prop}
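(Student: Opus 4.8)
The plan is to handle the five assertions in order, building each factor by hand and then reading off primeness, spectral equivalence, coupling, and rigidity.

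First I would exhibit the factors concretely. For $0<c\le 1$ set $B_{c}=[0,c]$. Since $T\times\text{Id}$ fixes the $[0,1]$-coordinate, $X\times B_{c}$ is a $(T\times\text{Id})$-invariant set, and the first projection $\pi:(x,s)\mapsto x$ is a factor map from the restricted base $\left(X\times B_{c},(\mu\otimes\lambda)_{\mid X\times B_{c}},T\times\text{Id}\right)$ onto $\left(X,\mathcal{A},c\mu,T\right)$, because $\pi_{*}\bigl((\mu\otimes\lambda)_{\mid X\times B_{c}}\bigr)=\lambda(B_{c})\,\mu=c\mu$. This is exactly the data (an invariant set followed by a $\sigma$-finite factor of the restriction) defining a Poissonian factor in the Poissonian-factor construction recalled earlier, so $\left(X^{*},\mathcal{A}^{*},(c\mu)^{*},T_{*}\right)$ is realized as a Poissonian factor of the big suspension through the factor map $\nu\mapsto\pi_{*}(\nu_{\mid X\times B_{c}})$.

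For primeness I would check the hypotheses of Theorem \ref{thm:Main} for the base $\left(X,\mathcal{A},c\mu,T\right)$: it is ergodic and of infinite measure (both insensitive to the positive scaling), and prime, since the $\sigma$-finite sub-$\sigma$-algebras of $\mathcal{A}$ are unaffected by rescaling the measure. As $c\mu$ and $\mu$ are equivalent, $U_{T}$ has the same maximal spectral type $\sigma$ on $L^{2}(c\mu)$, so $\sigma^{*n}\perp\sigma^{*m}$ persists and property CP holds for the suspension; Theorem \ref{thm:Main} then gives primeness of each factor. For the unitary isomorphism I would note that $Vf:=\sqrt{c_{1}/c_{2}}\,f$ is a unitary from $L^{2}(c_{1}\mu)$ onto $L^{2}(c_{2}\mu)$ intertwining $U_{T}$; applying the second quantization (Fock) functor, which sends $U_{T}$ to $U_{T_{*}}$, produces a unitary between the two suspension $L^{2}$-spaces intertwining the Koopman operators. (Equivalently, $U_{T_{*}}$ has maximal spectral type $\bigoplus_{n}\sigma^{*n}$ with multiplicities depending only on $\sigma$, hence independent of $c$.)

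For non-disjointness I would keep the explicit realizations with $B_{c_{1}}=[0,c_{1}]$ and $B_{c_{2}}=[0,c_{2}]$, which overlap in $[0,\min(c_{1},c_{2})]$. The map $\nu\mapsto\bigl(\pi_{*}(\nu_{\mid X\times B_{c_{1}}}),\pi_{*}(\nu_{\mid X\times B_{c_{2}}})\bigr)$ pushes $(\mu\otimes\lambda)^{*}$ to a $(T_{*}\times T_{*})$-invariant joining of the two factors, and for any $C\in\mathcal{A}$ with $0<\mu(C)<\infty$ the corresponding first-chaos coordinates satisfy $\text{Cov}\bigl(N(C\times B_{c_{1}}),N(C\times B_{c_{2}})\bigr)=\mu(C)\,\lambda(B_{c_{1}}\cap B_{c_{2}})>0$. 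Hence this joining is not the product, and the two factors are non-disjoint.

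The delicate point is non-isomorphism for $c_{1}\neq c_{2}$, and here I would push a hypothetical metric isomorphism down to the base. A metric isomorphism $\Theta:\left(X^{*},(c_{1}\mu)^{*},T_{*}\right)\to\left(X^{*},(c_{2}\mu)^{*},T_{*}\right)$ is in particular an ergodic (graph) joining of the two suspensions; under the spectral separation $\sigma^{*n}\perp\sigma^{*m}$ its Koopman operator must respect the chaos grading, and the argument underlying the centralizer description in \cite{Roy07Infinite} shows that such a joining is Poissonian, i.e.\ it arises from a joining of the bases $\left(X,c_{1}\mu,T\right)$ and $\left(X,c_{2}\mu,T\right)$, whose first-chaos part is then an isomorphism of these bases. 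By Proposition \ref{pro:AarNad} a joining of $\left(X,c_{1}\mu,T\right)$ and $\left(X,c_{2}\mu,T\right)$ exists only when $c_{1}=c_{2}$, a contradiction. I expect essentially all the difficulty to sit in this last descent, namely transferring the Poisson rigidity of \cite{Roy07Infinite} from the centralizer (self-isomorphisms of a single suspension) to isomorphisms between two distinct suspensions; the first four steps are bookkeeping with the Poissonian-factor formalism and the Fock structure.
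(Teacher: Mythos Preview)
Your argument is correct and tracks the paper's proof closely on four of the five points: the factor map $\nu\mapsto\nu(\cdot\times[0,c])$, the verification of Theorem~\ref{thm:Main} for primeness, the spectral argument for unitary equivalence, and the descent to the base for non-isomorphism are all the paper's own moves. On the last step your hesitation is unnecessary: the paper simply invokes Proposition~5.2 of \cite{Roy07Infinite}, which (under $\sigma\perp\sigma^{*n}$) forces any isomorphism between the two suspensions to be of the form $R_{*}$ for a base isomorphism $R$, not only elements of the centralizer; then Proposition~\ref{pro:AarNad} finishes.

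The one genuine divergence is the non-disjointness argument. You exploit the overlap $B_{c_{1}}\cap B_{c_{2}}$ inside the ambient suspension and compute a covariance to see the pushed-forward joining is not the product. The paper instead uses the superposition fact from Section~\ref{sub:Superposition-of-Poisson-Measures}: for $c_{1}<c_{2}$, $(X^{*},(c_{2}\mu)^{*},T_{*})$ is a factor of $(X^{*},(c_{1}\mu)^{*},T_{*})\times(X^{*},((c_{2}-c_{1})\mu)^{*},T_{*})$, which immediately yields a non-product joining. Your route is more self-contained and stays inside the given ambient object; the paper's route recycles a structural lemma already stated and avoids any computation. Both are short and valid.
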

\begin{proof}
The factor relationship is implemented by the map $\nu\mapsto\nu\left(\cdot\times\left[0,c\right]\right)$.

The systems $\left(X,\mathcal{A},c\mu,T\right)$ have the same properties
as $c$ spans $[0,1]$. In particular $\left(X^{*},\mathcal{A}^{*},\left(c\mu\right)^{*},T_{*}\right)$
are prime and have the same spectrum, henceforth they all are unitarily
isomorphic.

It is recalled in Section \ref{sub:Superposition-of-Poisson-Measures}
that the superposition of two independent Poisson measures with intensity $\mu_{1}$
and $\mu_{2}$ leads to a Poisson measure with intensity $\mu_{1}+\mu_{2}$.
Therefore, if $c_{1}<c_{2}$ then $\left(X^{*},\mathcal{A}^{*},\left(c_{2}\mu\right)^{*},T_{*}\right)$
is a factor of the direct product of $\left(X^{*},\mathcal{A}^{*},\left(c_{1}\mu\right)^{*},T_{*}\right)$
with $\left(X^{*},\mathcal{A}^{*},\left(\left(c_{2}-c_{1}\right)\mu\right)^{*},T_{*}\right)$.
This yields a joining between $\left(X^{*},\mathcal{A}^{*},\left(c_{2}\mu\right)^{*},T_{*}\right)$
and $\left(X^{*},\mathcal{A}^{*},\left(c_{1}\mu\right)^{*},T_{*}\right)$;
it is not independent for obvious reasons.

Now assume there exists an isomorphism $S$ between both systems.
As $\sigma\perp\sigma^{*n}$, $n\ge2$, it implies, thanks to Proposition
5.2 in \cite{Roy07Infinite}, that $S=R^{*}$ for an isomorphism $R$
between $\left(X,\mathcal{A},c_{1}\mu,T\right)$ and $\left(X,\mathcal{A},c_{2}\mu,T\right)$,
but such an isomorphism does not exist by Proposition \ref{pro:AarNad}.
\end{proof}

\subsection{A mixing example}

Another source of examples is furnished by recent Ryzhikov's infinite
measure preserving ``mixing'' rank one transformations (see \cite{Ryz11mixrinfank}),
whose construction would be too long to be given here.

He proved, in particular, that all those systems have the \emph{minimal
self-joining} property in infinite measure (the only ergodic self-joinings
are off-diagonal joinings) which implies that they are prime as in
previous examples (see Proposition \ref{pro:AarNad}).
Moreover he proved, with some extra assumptions, that Poisson suspensions over
such systems have simple (and singular) spectrum,
which in turn implies that they have the property CP (indeed, a necessary
condition for a Poisson suspension to have simple spectrum is that
$\sigma^{*n}\perp\sigma^{*m}$ for all $n\neq m$, where $\sigma$
is the maximal spectral type of the base). If we sum up and apply
Theorem \ref{thm:Main}, we get:
\begin{prop}
There exist prime Poisson suspensions which are mixing, with simple
singular spectrum and trivial centralizer.
\end{prop}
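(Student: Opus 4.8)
The plan is to assemble the ingredients furnished by Ryzhikov's construction and feed them into Theorem \ref{thm:Main}, exactly as in the previous examples. Fix an infinite measure preserving rank one transformation $\left(X,\mathcal{A},\mu,T\right)$ from \cite{Ryz11mixrinfank}, chosen among those satisfying the extra assumptions that guarantee that $\left(X^{*},\mathcal{A}^{*},\mu^{*},T_{*}\right)$ has simple singular spectrum. Being rank one, $T$ is ergodic and $\mu$ is infinite. Since $T$ enjoys the minimal self-joining property in infinite measure, the argument used for Proposition \ref{pro:AarNad} shows that $T$ is prime: any non-trivial $\sigma$-finite factor would be coded by a non-trivial ergodic self-joining, which here must be off-diagonal, so the factor is all of $\mathcal{A}$.

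Next I would verify property CP. Simplicity of the spectrum of $T_{*}$ forces $\sigma^{*n}\perp\sigma^{*m}$ for all distinct $n,m\in\mathbb{N}^{*}$, where $\sigma$ is the maximal spectral type of the base: otherwise two distinct chaoses $H^{n}$ and $H^{m}$, carrying maximal spectral types $\sigma^{*n}$ and $\sigma^{*m}$, would share a spectral component, contradicting simplicity. By the Example following the definition of property CP, this mutual singularity yields property CP. All the hypotheses of Theorem \ref{thm:Main} are then in place---$T$ ergodic, prime, infinite measure preserving, with property CP---so $\left(X^{*},\mathcal{A}^{*},\mu^{*},T_{*}\right)$ is prime.

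It remains to record the three spectral and dynamical properties. Simplicity and singularity of the spectrum of $T_{*}$ are part of Ryzhikov's statement. For mixing I would use that $T_{*}$ is mixing precisely when the base is mixing in the infinite measure sense, i.e. $\left\langle U_{T}^{k}f,g\right\rangle \to0$ for all $f,g\in L^{2}\left(\mu\right)$; this decay on the first chaos $H^{1}\simeq L^{2}\left(\mu\right)$ propagates through $U_{T}^{\odot n}$ to every chaos $H^{n}\simeq L_{\text{sym}}^{2}\left(\mu^{\otimes n}\right)$, since the matrix coefficients of $\left(U_{T}^{k}\right)^{\odot n}$ are built from those of $U_{T}^{k}$, and hence to all of $L^{2}\left(\mu^{*}\right)$. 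As Ryzhikov's transformations are mixing, so is $T_{*}$. Finally, for the centralizer, the mutual singularity $\sigma^{*n}\perp\sigma^{*m}$ again lets us invoke \cite{Roy07Infinite}: every $S$ commuting with $T_{*}$ is of the form $R_{*}$ for some $R$ commuting with $T$. But the minimal self-joining property forces the centralizer of $T$ to consist only of the powers of $T$, whence the centralizer of $T_{*}$ is reduced to the powers of $T_{*}$.

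The main obstacle I anticipate is the mixing step: one must make sure that the precise notion of ``mixing'' established by Ryzhikov for the infinite measure base is exactly the one that transfers to ordinary mixing of the probability preserving suspension, and that the weak decay on $H^{1}$ genuinely controls the full correlations of arbitrary $L^{2}\left(\mu^{*}\right)$ observables. Everything else is a routine check of the hypotheses of Theorem \ref{thm:Main} together with a bookkeeping of the known spectral facts.
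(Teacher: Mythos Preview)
Your proposal is correct and follows exactly the paper's route: take Ryzhikov's ``mixing'' rank one infinite measure preserving systems with MSJ (hence prime base), observe that simple spectrum of the suspension forces $\sigma^{*n}\perp\sigma^{*m}$ and hence property CP, and apply Theorem~\ref{thm:Main}. The only difference is that you spell out the mixing transfer through the chaos decomposition and the triviality of the centralizer via \cite{Roy07Infinite} and MSJ, whereas the paper leaves these implicit or subsumed in Ryzhikov's results; your concern about the mixing step is unfounded, as the correlation decay on $H^{1}$ does propagate to each $H^{n}$ exactly as you describe.
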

Observe that, as any mixing rank one is MSJ(2), those prime mixing Poisson
suspensions are disjoint from any previously known prime systems,
thanks to Theorem \ref{thm:ParreauLemRoy} and Proposition \ref{prop:disjointness}.

\section{Appendix}
\begin{prop}
If a Poisson suspension is of rank one, then it is rigid.\end{prop}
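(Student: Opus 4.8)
The plan is to reduce everything to a statement about the unitary $U_T$ on $L^2(\mu)\simeq H^1$ and then to exploit the exponential (coherent-state) vectors of the Fock space. Write $S=T_*$ for the suspension, a rank one probability-preserving map. I would first recall the standard fact that every rank one transformation is partially rigid: there exist $\delta>0$ and $n_k\to\infty$ with $\mu^*(A\cap S^{-n_k}A)\ge\delta\,\mu^*(A)$ for all $A\in\mathcal{A}^*$. Passing to a subsequence, the powers $U_{S}^{n_k}$ converge in the weak operator topology to the Markov operator $P$ of the limiting self-joining $\lambda$, and partial rigidity means precisely that $\lambda$ dominates $\delta$ times the diagonal joining; hence $P=\delta\,\mathrm{Id}+(1-\delta)Q$, where $Q$ is again a positivity-preserving Markov operator coming from a genuine self-joining of $S$.

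The first key step is that $P$ preserves each chaos: every $H^n$ is a closed $U_{S}$-invariant subspace, so it is preserved by all $U_{S}^{n_k}$ and hence by their weak limit $P$. Setting $P_1:=P|_{H^1}$ (the weak limit of $U_T^{n_k}$ on $L^2(\mu)$), multiplicativity of symmetric tensor powers under weak limits gives $P|_{H^n}=P_1^{\odot n}$, so $P=\bigoplus_{n\ge0}P_1^{\odot n}$. Consequently $P$ acts on the coherent states $\mathcal{E}(g):=\bigoplus_n\frac1{n!}I^{(n)}(g^{\otimes n})$ by $P\,\mathcal{E}(g)=\mathcal{E}(P_1 g)$, and since $\langle\mathcal{E}(f),\mathcal{E}(g)\rangle=e^{\langle f,g\rangle_{L^2(\mu)}}$ one gets, for real $g$,
\[
e^{\langle P_1 g,\,g\rangle}=\langle P\,\mathcal{E}(g),\mathcal{E}(g)\rangle=\delta\,e^{\|g\|^2}+(1-\delta)\,\langle Q\,\mathcal{E}(g),\mathcal{E}(g)\rangle .
\]

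The decisive point is a positivity input. For $g\ge0$ the coherent state is a nonnegative function, namely $\mathcal{E}(g)(\nu)=e^{-\int_X g\,d\mu}\prod_{x}(1+g(x))$ (the product over the points of $\nu$), so $Q\,\mathcal{E}(g)\ge0$ as $Q$ is positivity-preserving, whence $\langle Q\,\mathcal{E}(g),\mathcal{E}(g)\rangle\ge0$. The identity above then yields $e^{\langle P_1 g,g\rangle}\ge\delta\,e^{\|g\|^2}$, i.e. $\langle P_1 g,g\rangle\ge\|g\|^2+\log\delta$ for every nonnegative $g\in L^1(\mu)\cap L^2(\mu)$. Replacing $g$ by $tg$ and letting $t\to+\infty$ annihilates the fixed constant $\log\delta$ and forces $\langle P_1 g,g\rangle\ge\|g\|^2$; combined with $\|P_1\|\le1$ this gives $\langle P_1 g,g\rangle=\|g\|^2$, hence $P_1 g=g$, for all such $g$. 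As these functions are dense in $L^2(\mu)$, I conclude $P_1=\mathrm{Id}$, so $U_T^{n_k}\to\mathrm{Id}$ weakly, and therefore strongly. Thus $T$ is rigid along $(n_k)$, and since $U_{T_*}^{n_k}=\bigoplus_n(U_T^{n_k})^{\odot n}$ then converges strongly to the identity, the Poisson suspension is rigid.

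I expect the main obstacle to be the bookkeeping around the positivity step: one must check carefully that the limit operator $Q$ arises from an honest (positive) self-joining, so that it preserves positivity of functions; that the coherent states $\mathcal{E}(g)$ employed genuinely lie in $L^2(\mu^*)$ and are nonnegative when $g\ge0$; and that $P$ indeed restricts to $P_1^{\odot n}$ on each chaos. The passage from a mere partial rigidity constant $\delta$ (which for a general rank one map need not exceed $\tfrac12$) to full rigidity is exactly what the scaling $g\mapsto tg$ inside the exponential relation accomplishes, and this is the conceptual heart of the argument.
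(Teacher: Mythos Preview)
Your argument is correct and takes a genuinely different route from the paper's. The paper invokes Ryzhikov's structure theorem for ergodic self-joinings of rank one maps together with the ELF property of Poisson suspensions: every ergodic self-joining $\Phi$ of $T_*$ is then a weak limit of powers $U_{T_*}^{n_k}$, and one reaches the identity by applying this to the ergodic Poissonian joinings $\exp\bigl((1-\tfrac{1}{n})\,\mathrm{Id}\bigr)$ and letting $n\to\infty$. You instead start from the more elementary fact that rank one implies partial rigidity, pass to a weak limit $P=\delta\,\mathrm{Id}+(1-\delta)Q$, and then use the exponential (coherent-state) vectors together with the scaling $g\mapsto tg$ to upgrade partial rigidity to full rigidity. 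The decisive point---that $P|_{H^n}=(P|_{H^1})^{\odot n}$, hence $\langle P\mathcal{E}(g),\mathcal{E}(g)\rangle=e^{\langle P_1 g,g\rangle}$, combined with $\langle Q\mathcal{E}(g),\mathcal{E}(g)\rangle\ge0$ for $g\ge0$---is a clean exploitation of the Fock structure that bypasses both ELF and the full Ryzhikov theorem. Your approach is thus more self-contained; the paper's has the advantage of plugging directly into existing joining machinery and generalizes immediately to the local-rank-zero statement mentioned in the Remark following the proof.

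The caveats you flag are all harmless: $Q$ is indeed the Markov operator of the self-joining $(1-\delta)^{-1}(\lambda-\delta\Delta)$ and hence positivity-preserving; for $g\ge0$ in $L^1\cap L^2$ the vector $\mathcal{E}(g)$ is the nonnegative $L^2$ function $e^{-\int g\,d\mu}\prod_{x}(1+g(x))$; and the identification $P|_{H^n}=P_1^{\odot n}$ follows from weak continuity of tensor products of contractions, since $U_{T_*}^{n_k}|_{H^n}=(U_T^{n_k})^{\odot n}$. The final step---$\langle P_1 g,g\rangle=\|g\|^2$ forcing $P_1 g=g$ for a contraction $P_1$, then density---is routine.
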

\begin{proof}
We need the following property of rank one systems, established
by Ryzhikov in \cite{Ryz92Rank}: If $\Phi$ is a Markov operator
corresponding to an ergodic selfjoining of a rank one transformation
$T$, then there exists $a>0$, a Markov operator $\Psi$ and a sequence
$n_{k}$ such that $U_{T^{n_{k}}}$ converges weakly to $a\Phi+\left(1-a\right)\Psi$.

We recall that every Poisson suspension $\left(X^{*},\mathcal{A}^{*},\mu^{*},T_{*}\right)$
has the so-called ELF property (see \cite{Lem05ELF}), that
is every limit of off-diagonal joinings is ergodic. Therefore, in the
above situation, $a=1$ and $U_{T_{*}^{n_{k}}}\to \Phi$.

However, we cannot apply this result directly to $\Phi=Id$, as we have to rule out a sequence $(n_{k})$ that would be eventually equal to zero.

We recall moreover that we build a \emph{Poissonian joining} (see \cite{Lem05ELF}
and \cite{Roy07Infinite}) of a Poisson suspension $T_{*}$ by considering
a sub-Markov operator $\varphi$ that commutes with the base transformation $T$
and forming the exponential $\text{exp}(\varphi)$ that acts on each
chaos $H^{n}$ of $L^{2}\left(\mu^{*}\right)$ as $\varphi^{\odot n}$. Moreover, Poissonian joinings of an ergodic Poisson
suspension are ergodic. Therefore we can apply Ryzhikov's result to
the Markov operators $\text{exp}\left(\left(1-\frac{1}{n}\right)Id_{L^{2}\left(\mu\right)}\right)$
for each $n>1$. As $\left(1-\frac{1}{n}\right)Id_{L^{2}\left(\mu\right)}$
tends to $Id_{L^{2}\left(\mu^{*}\right)}$, we have that $\text{exp}\left(\left(1-\frac{1}{n}\right)Id_{L^{2}\left(\mu\right)}\right)$
tends to $\text{exp}\left(Id_{L^{2}\left(\mu\right)}\right)=Id_{L^{2}\left(\mu^{*}\right)}$.
It now follows that $Id_{L^{2}\left(\mu^{*}\right)}$ is a limit point of 
$(U_{T_{*}^{n}})$ ($n\neq 0$) and thus $T_{*}$ is rigid.
\end{proof}
\begin{rem}
Valery Ryzhikov informed us that the same proof shows even more, namely
that non-rigid Poisson suspensions are of local rank zero (and therefore
of infinite rank).\end{rem}

\bibliographystyle{plain}

\end{document}